\newcommand{\xad}{{x_{\alpha}^\delta}}
\newcommand{\yd}{{y_\delta}}
\newcommand{\pad}{{p_{\alpha}^\delta}}
\newcommand{\xd}{{x^\dagger}}
\newcommand{\xa}{{x_\alpha}}
\newcommand{\xads}{{x_{\alpha^*}^\delta}}
\newcommand{\xas}{{x_{\alpha^*}}}
\newcommand{\cur}{\gamma}
\newcommand{\MC}{{\rm MC}}
\newcommand{\Car}{C_{0}}
\newcommand{\Ci}{{C_1}}
\newcommand{\Cii}{{C_2}}
\newcommand{\as}{\alpha^*}
\newcommand{\ba}{\bar{\alpha}}
\newcommand{\uppera}{B}
\newcommand{\upperd}{V}
\newcommand{\pls}{\psi_{SL}}
\newcommand{\plr}{\psi_{SLR}}
\newcommand{\N}{{\mathbb N}}
\newcommand{\ignore}[1]{}
\newcommand{\rhi}{\omega}
\newcommand{\Id}{{\rm Id}}
\newcommand{\xl}{\chi} 
\newcommand{\yl}{\kappa}
\newcommand{\scp}[1]{\left\langle #1 \right \rangle}
 \newtheorem{lemma}{Lemma}
 \newtheorem{proposition}{Proposition}
 \newtheorem{definition}{Definition}
 \newtheorem{theorem}{Theorem}
 \newtheorem{remark}{Remark}
\title{A simplified L-curve method as error estimator}
\author{Stefan Kindermann\footnotemark[2]
\and Kemal Raik\footnotemark[3]
}
\begin{document}
\maketitle
\renewcommand{\thefootnote}{\fnsymbol{footnote}}
\footnotetext[2]{Industrial Mathematics Institute, Johannes Kepler University Linz, Altenbergerstra{\ss}e 69, 4040 Linz, Austria 
({\tt kindermann@indmath.uni-linz.ac.at}).}
\footnotetext[3]{Industrial Mathematics Institute, Johannes Kepler University Linz, Altenbergerstra{\ss}e 69, 4040 Linz, Austria 
({\tt kemal.raik@indmath.uni-linz.ac.at}).}

\begin{abstract}
The L-curve method is a well-known heuristic  method for choosing the regularization parameter for ill-posed problems by selecting 
it according to the maximal curvature of the L-curve. In this article, we propose a simplified version that replaces 
the curvature essentially by the derivative of the parameterization on the $y$-axis. This method shows a similar 
behaviour to the original L-curve method, but unlike the latter, it may serve as an error estimator under 
typical conditions. Thus, we can accordingly prove convergence for the simplified L-curve method. 
\end{abstract}

\section{Introduction}
The L-curve criterion is one of the best-known heuristic methods for choosing 
the regularization parameter in various regularization methods for ill-posed 
problems. 
One of the first instances of an 
L-curve graph appeared in the book by Lawson and Hanson~\cite{LaHa74}, 
although it was not related to a parameter choice procedure. 
That it can be the  basis for a parameter choice method was 
suggested by Hansen and O'Leary~\cite{HaLe93} and 
 further analyzed and popularized by Hansen \cite{Ha92}.  

The methodology is well-known: Suppose that we are faced with the problem of 
solving an ill-posed problem of the form 
\begin{equation}\label{main} A x = y, \end{equation}
where $y$ are data and $A:X\to Y$ is a continuous linear operator between Hilbert spaces 
which lacks a continuous inverse. Moreover, we assume that only noisy data 
\[ \yd = y + e, \qquad \|e\|\leq \delta, \quad y = A \xd,  \]
are available, where $\xd$ denotes the "true" unknown solution (or, more 
precisely, the minimal-norm solution). Here, $e$ denotes an unknown error, and 
its norm is called the noise-level $\delta$. In the case of heuristic parameter 
choice rules, which the L-curve method is an example of, this noise-level is considered 
unavailable. 

As the inverse of $A$ is not bounded, the problem \eqref{main} cannot be solved 
by classical inversion algorithms, rather, a regularization scheme has to be 
applied \cite{EnHaNe96}. That is, one constructs a one-parametric family of continuous 
operators $(R_\alpha)_\alpha$, with $\alpha >0$, that in some sense approximates the 
inverse of $A$ for $\alpha \to 0$.

An approximation  to the true solution of \eqref{main},
denoted as $\xad$, is computed by means of the regularization operators: 
\[ \xad = R_\alpha \yd. \]
A delicate issue in regularization schemes is the choice of the regularization 
parameter $\alpha$ and the standard methods make use of the noise-level $\delta$. 
However, in situations when this is not available, so-called {\em heuristic} parameter 
choice methods \cite{KiETNA} are proposed.
The L-curve method selects an $\alpha$ corresponding to the  corner point  of 
the graph $(\log(\|A\xad -\yd\|),\log(\|\xad\|))$ parameterized by $\alpha$.

Recently,  \cite{KiNe, KiETNA}
a convergence theory for certain heuristic parameter choice rules was developed. Essential in this analysis is a restriction on the noise that rules out noise which is "too regular". Such noise conditions in the form of Muckenhoupt-type conditions 
were used in \cite{KiNe,KiETNA} and are currently the standard tool in the analysis of heuristic 
rules. If these conditions hold, then several well-known heuristic parameter choice rules serve as 
error estimators for the total error in typical regularization schemes and convergence and convergence rate results follow. 

The L-curve method, however, does not seem to be accessible to such an analysis, 
although some of its properties were investigated, for instance, by Hansen \cite{Ha92,Ha01} 
and Reginska \cite{Re96}.
Nevertheless, it does not appear that it can be related to some sort of error estimators directly. 

There are various suggestions for efficient practical implementations of the L-curve method, 
like Krylov-space methods \cite{LoRo,CaHaRe} or model functions \cite{LuMa}. 
Note that the method is also implemented in Hansen's Regularization Tools \cite{HaReg}. 
A generalization of the L-curve method in form of the Q-curve method was recently 
suggested by Raus and H\"amarik \cite{RaHa}. Other simplifications or variations 
are the V-curve \cite{Vcurve} or the U-curve \cite{Ucurve}. Some overview and comparisons 
of  other heuristic and non-heuristic methods are given in \cite{HaPaRa,HaPaRa11,BaLu} and 
the PhD.~thesis of Palm \cite{Pa10}.

The aim of this article is to propose a simplified version of the L-curve method by dropping 
several terms in the expression for the curvature of the L-graph. We argue that this simplified 
version does not alter the original method significantly, and, moreover, we prove that 
the simplified L-curve has error estimating capabilities similar to several other well-known 
heuristic methods. This allows us to state conditions under which we can verify 
convergence of the simplified L-curve method.

\subsection{The L-Curve method and its simplification for Tikhonov regularization} 
We use a standard setting of an ill-posed problem of the form \eqref{main}. Although not 
necessary for our analysis and only used for clarity, we assume that $A$ is a compact 
operator, which then has a singular value decomposition (SVD) $(\sigma_i,u_i,v_i)_{i \in \N}$, with the 
positive singular values $\sigma_i$ and the singular functions $u_i \in X$, $v_i \in Y$ such that 
\[ A x = \sum_i \sigma_i \scp{u_i,x} v_i, \qquad \lambda_i:= \sigma_i^2>0,\]
where $\scp{\cdot,\cdot}$ denotes the scalar product in $X$ (or also in $Y$). 
As regularization operator, we employ
Tikhonov regularization, which 
defines a regularized solution  to \eqref{main} (an approximation to the true solution $\xd$) via 
\begin{equation}\label{Tikreg}
\xad  := (A^*A + \alpha I)^{-1} A^* \yd. 
\end{equation}
Here $\alpha\in(0,\alpha_{\text{max}})$ is the regularization parameter. 
For notational purposes we also define the (negative) residual $\pad$ 
and the auxiliary regularized solution with exact data  $\xa$ 
\begin{align}
\pad &:= \yd - A \xad = \alpha (A A^* +\alpha I)^{-1} \yd, \label{res} \\
\xa  &:= (A^*A + \alpha I)^{-1} A^* y. \label{appx}
\end{align} 
The overall goal of a good parameter choice is always to 
minimize the total error $\|\xad -\xd\|$, which can be 
bounded by the sum of 
the {\em stability error}  $\|\xad-\xa\|$ and 
the {\em approximation error} $\|\xa -\xd\|$:
\begin{equation}\label{upper} \|\xad -\xd\| \leq \|\xad -\xa\| + \|\xa - \xd\|. \end{equation}
It is well-known that the approximation error can in general decay arbitrarily slowly. 
In order to establish bounds for it and thus derive convergence rates, one has to postulate 
a certain smoothness condition on $\xd$ in the form of a source condition:
Here, we focus on H\"older source conditions, i.e., such a source condition holds if $\xd$ can be expressed as 
\begin{equation}\label{source}
\xd = (A^*A)^\mu \omega, \qquad \|\omega\| \leq C, \qquad \mu > 0. 
\end{equation} 
In terms of the SVD, $\xd$ satisfies \eqref{source} if 
\[ \sum_i \frac{|\scp{\xd,u_i}|^2}{\lambda_i^{2\mu}} < \infty. \]
If this is the case, then for Tikhonov regularization we have that 
\begin{equation}\label{eq:conratest}  \|\xa - \xd\| \leq C \alpha^{\mu}, \qquad \text{ for } 0 \leq \mu \leq 1, \end{equation}
and the convergence rate
\[  \|\xad -\xd\| \leq C {\delta^\frac{2\mu}{2\mu+1}}, \qquad \text{ for } 0 \leq \mu \leq 1,  \]
which is known to be the optimal order of convergence under \eqref{source}. 
We also note the saturation effect of Tikhonov regularization, which means that the rates do not 
improve for higher source conditions beyond $\mu >1$; see, e.g.,  \cite{EnHaNe96}.

\subsection{The L-curve} 
The L-curve is a plot of the (logarithm of the) residual against the 
(logarithm of the) norm of the regularized solution. 
Define the following curve parameterized by the regularization parameter $\alpha$ 
\[  \yl(\alpha) = \log(\|\pad\|^2) = \log(\|A \xad - \yd\|^2),\qquad 
\xl(\alpha) =  \log(\|\xad\|^2).  \]
Then a plot of the curve 
\begin{equation}\label{afist} \alpha \to \begin{bmatrix}  \yl(\alpha)\\
\xl(\alpha) \end{bmatrix}, 
\end{equation}
yields a graph, which often resembles the shape of an "L", hence its name  L-curve. The idea of the L-curve method is to choose $\alpha$ as the 
curve parameter that corresponds to the {\em corner point} of the "L". 
Since a corner has a large curvature, the operational definition of 
the parameter selection by the L-curve is that of the maximizer (over the selected range of $\alpha$) 
of the curvature of the L-graph, i.e., $\alpha=:\as$ is selected as 
\[ \as = \text{argmax}_\alpha \,\,  \cur(\alpha), \] 
with the signed curvature defined as  (see, e.g.,  \cite{Ha01}), 
\[  \cur(\alpha) = \frac{\xl''(\alpha) \yl'(\alpha)-\xl'(\alpha) \yl''(\alpha) }{\left( \xl'(\alpha)^2+ \yl'(\alpha)^2\right)^{3/2} }. \] 
Here  a prime $'$ denotes differentiation with respect to $\alpha.$
For Tikhonov regularization and many other methods, 
it is not difficult to realize that $\yl(\alpha)$ is strictly monotonically decreasing in $\alpha$, 
hence, the L-curve can be considered as a graph of a function $f = \xl(\yl^{-1})$.


As already observed by Hansen \cite{Ha01}, for Tikhonov regularization
the curvature  does not involve second derivatives and can be reduced to 
\begin{equation}\label{form} 
\cur(\alpha) 
= \frac{\eta \rho}{|\eta'|}
\frac{\rho \eta + \alpha \eta' \rho +
\alpha^2 \eta' \eta }{\left(\rho^2 +\alpha^2 \eta^2\right)^\frac{3}{2}},  
\end{equation} 
where 
\begin{equation}
 \eta = \eta(\alpha):= \|\xad\|^2, \qquad \rho = \rho(\alpha):= \|\pad\|^2. 
 \end{equation}
The following lemma investigates this expression:
\begin{lemma}\label{firstlem}
We have 
 \begin{align}
\cur(\alpha) 
&= \frac{\eta }{\alpha |\eta'|}
\frac{\zeta^2 }{\left(\zeta^2 +1 \right)^\frac{3}{2}}-  \frac{\zeta(1 +\zeta)}{\left(\zeta^2 +1 \right)^\frac{3}{2}}
, \qquad \zeta = \zeta(\alpha) := \frac{\rho}{\alpha \eta} \label{formsi} \\
&=: \frac{\eta }{\alpha |\eta'|} c_1(\zeta)  - c_2(\zeta),  \label{formsi2} 
\end{align} 
where 
\[ 0\leq c_1(\zeta) \leq \frac{2}{3 \sqrt{3}}, \qquad 
0 \leq c_2(\zeta) \leq \frac{1}{\sqrt{2}}. \]
\end{lemma}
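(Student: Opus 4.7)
The plan is to prove \eqref{formsi} by a direct algebraic substitution and then to establish the two inequalities for $c_1$ and $c_2$ by one-variable calculus.

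For the first step, I would start from \eqref{form} and everywhere replace $\rho$ by $\alpha\eta\zeta$. The denominator immediately simplifies:
\[
(\rho^2 + \alpha^2\eta^2)^{3/2} = \bigl(\alpha^2\eta^2(\zeta^2+1)\bigr)^{3/2} = \alpha^3\eta^3(\zeta^2+1)^{3/2}.
\]
Each of the three summands in the numerator $\rho\eta + \alpha\eta'\rho + \alpha^2\eta'\eta$ carries a common factor $\alpha\eta$, which allows one to factor and cancel against the prefactor $\eta\rho/|\eta'| = \alpha\eta^2\zeta/|\eta'|$. Collecting terms produces
\[
\cur(\alpha) = \frac{\eta\,\zeta^2}{\alpha|\eta'|(\zeta^2+1)^{3/2}} + \frac{\eta'}{|\eta'|}\,\frac{\zeta(1+\zeta)}{(\zeta^2+1)^{3/2}}.
\]
For Tikhonov regularization a direct SVD computation gives $\eta'(\alpha) = -2\sum_i \sigma_i^2|\langle\yd,v_i\rangle|^2/(\sigma_i^2+\alpha)^3 \leq 0$, so $\eta'/|\eta'| = -1$ (wherever the expression in \eqref{formsi} is defined), which yields \eqref{formsi} and \eqref{formsi2} after naming the $\zeta$-dependent factors.

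For $c_1(\zeta) = \zeta^2/(\zeta^2+1)^{3/2}$, nonnegativity on $[0,\infty)$ is immediate, and differentiation gives $c_1'(\zeta)$ proportional to $\zeta(2-\zeta^2)$, so the unique positive critical point is $\zeta = \sqrt{2}$. Evaluating, $c_1(\sqrt{2}) = 2/3^{3/2} = 2/(3\sqrt{3})$, and since $c_1 \to 0$ at both endpoints this is indeed the maximum.

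For $c_2(\zeta) = \zeta(1+\zeta)/(\zeta^2+1)^{3/2}$, the only mildly delicate step is to identify the critical point. Differentiating and clearing the positive factor $(\zeta^2+1)^{1/2}/(\zeta^2+1)^3$ leaves the cubic $-\zeta^3 - 2\zeta^2 + 2\zeta + 1$ in the numerator. The key observation is that $\zeta = 1$ is a root, giving the factorization
\[
\zeta^3 + 2\zeta^2 - 2\zeta - 1 = (\zeta-1)(\zeta^2 + 3\zeta + 1).
\]
Since $\zeta^2+3\zeta+1 > 0$ on $[0,\infty)$, the unique critical point of $c_2$ on the positive half-line is $\zeta = 1$, where $c_2(1) = 2/2^{3/2} = 1/\sqrt{2}$; combined with $c_2(0) = 0$ and $c_2(\zeta) = O(1/\zeta)$ as $\zeta\to\infty$, this is the global maximum. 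Spotting this factorization is essentially the only nonroutine part of the proof; everything else is bookkeeping.
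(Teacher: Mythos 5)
Your proposal is correct and takes essentially the same route as the paper's proof, which simply rewrites \eqref{form} as \eqref{formsi} and invokes elementary calculus to locate the maxima of $c_1$ at $\zeta=\sqrt{2}$ and of $c_2$ at $\zeta=1$. You merely make explicit the details the paper leaves implicit: the substitution $\rho=\alpha\eta\zeta$, the sign fact $\eta'\leq 0$ (so $\eta'/|\eta'|=-1$), and the critical-point computations, all of which check out.
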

\begin{proof}
The expression \eqref{form} can easily be rewritten as \eqref{formsi} with 
\[ c_1(\zeta)= \frac{\zeta^2 }{\left(\zeta^2 +1 \right)^\frac{3}{2}},\qquad 
c_2(\zeta) =  \frac{\zeta(1 +\zeta)}{\left(\zeta^2 +1 \right)^\frac{3}{2}}.\] 
By elementary calculus, we may find the maxima for $c_1$ at $\zeta=\sqrt{2}$ and for $c_2$ at  $\zeta = 1$ yielding  
the upper bounds. 
\end{proof}

According to the rationale for the L-curve method, we are searching for a corner of the L-graph, i.e., by definition a point where 
$\cur(\alpha)$ has a  {\em large positive} value . 
(An ideal corner has infinite curvature.)
Thus, according to \eqref{formsi2}, the only expression in the previous 
lemma that could contribute to large 
values is 
$\frac{\eta}{\alpha |\eta'|}$. 
Hence, backed by Lemma~\ref{firstlem}, we 
propose to remove the $\zeta$-dependent expressions and instead of \eqref{afist}, 
maximize the functional 
\[ \as = \text{argmax}_\alpha \,  \frac{\eta}{\alpha |\eta'|},  \]
which leads to the   simplified 
L-curve methods of this article. Instead of maximization we may equivalently consider minimizing the reciprocal. Moreover, we propose two versions of the simplified method
(the factor $\frac{1}{2}$ below is introduced for notational purposes and is irrelevant for the analysis and the method): 
\begin{definition}
The simple-L method selects the regularization parameter $\alpha$ as the minimizer (over a range of $\alpha$-values)  of 
the simple-L functional:
\begin{equation}
\begin{split} 
\as &= \text{\rm argmin}_\alpha \, \pls(\alpha), \\ 
\pls(\alpha)&:= \left(-\frac{1}{2}\alpha  \eta'(\alpha) \right)^\frac{1}{2} = \left(-
\scp{\xad, \alpha \frac{\partial}{\partial \alpha} \xad }\right)^\frac{1}{2} . 
\end{split} 
\end{equation} 
The simple-L ratio method selects $\alpha$ as minimizer (over a range of $\alpha$-values) of 
\begin{equation}
\begin{split} 
\as &= \text{\rm argmin}_\alpha\, \plr(\alpha), \\ 
\plr(\alpha)&:=  \left(-\frac{1}{2}\alpha \frac{\eta'(\alpha)}{\eta(\alpha)}\right)^\frac{1}{2}  =  \left(\frac{-\scp{ \xad, \alpha \frac{\partial}{\partial \alpha} \xad }}{\|\xad\|^2} \right)^\frac{1}{2} . 
\end{split}
\end{equation} 
\end{definition}
The main advantage that these simplified L-curve methods hold is that under certain conditions, 
they serve as error estimators and convergence of the associated parameter choice methods 
can be proven in contrast to the original L-curve method. 

Another reason for using the simplified functionals is that $\pls$ resembles and can be compared with 
several other heuristic parameter choice functionals, which are known to have an error-estimating property. 
For instance the quasi-optimality  (QO) principle defines $\alpha$ as the minimizer  of 
\[ \psi_{QO}(\alpha) := \left\|\alpha \frac{\partial}{\partial \alpha} \xad\right\|, \]
while the heuristic discrepancy (HD) principle define it as minimizer of 
\[ \psi_{HD}(\alpha) := \frac{\left\| \pad\right\|}{\sqrt{\alpha}}. \]
An improvement of the HD-rule is the Hanke-Raus (HR) rule, which is defined as 
\[ \psi_{HR}(\alpha) := \left( \frac{1}{\alpha} \scp{\pad,\pad^{II}} \right)^\frac{1}{2}, \]
where $\pad^{II}$ is the second Tikhonov iterate;
for details, see, e.g., \cite{KiETNA}. 
For Tikhonov regularization, these $\psi$-functionals can be written in terms of the singular value decomposition
as 
\[ \psi(\alpha)^2 = \sum_{i} \frac{\alpha^{n-k-1} \lambda_i^k}{(\alpha + \lambda_i)^n} |\scp{\yd,v_i}|^2, \]
with $n = 4$, $k = 1$ for QO, $n = 3$, $k = 0$ for HR, and $n = 2$, $k = 0$ for HD. 
The structural similarity of these methods has led Raus to define the so-called R1-family of 
rules \cite{RausR1}, which QO and HR are special instances of. 

In terms of singular values, the $\pls$ functional can be written as 
\[ \pls(\alpha)^2 = \sum_{i} \frac{\alpha \lambda_i}{(\alpha + \lambda_i)^3} |\scp{\yd,v_i}|^2, \]
and we observe that it follows a similar pattern as the others with $n = 3$ and $k = 1$. 
Note, however, that it does not fall into the R1-class of rules. 

As for the other rules (see, e.g., \cite{Ne08,KiETNA}), one may also extend the definition of $\pls$ (and $\plr$) to more 
general regularization schemes: If $R_\alpha$ is defined by a filter function 
$g_\alpha(\lambda)$ of the form 
\[ R_\alpha \yd := g_\alpha(A^*A) A^* \yd, \qquad \text{ with } \quad 
 r_\alpha(\lambda):= 1 - \lambda g_\alpha(\lambda),  \]  
then we may extend the definition of $\pls$ as 
\begin{equation}\label{xy}
\pls(\alpha) = \| \rho_\alpha(A^*A)^\frac{1}{2} \yd\|, \qquad \rho_\alpha(\lambda) = \lambda g_\alpha(\lambda)^2 |r_\alpha(\lambda)|. \
\end{equation}
This definition agrees with that for Tikhonov regularization, where $g_\alpha(\lambda) = \frac{1}{\alpha +\lambda}$
and 
\begin{equation}\label{xxa}\rho_\alpha(\lambda) = \frac{\alpha \lambda_i}{(\alpha + \lambda_i)^3}. \end{equation}

\begin{remark} \rm
Let us also mention that the simple-L and simple-L ratio methods have some similarities with the V-curve method \cite{Vcurve}, 
which is defined as minimizer of the speed of the  parameterization of the L-curve on a logarithmic grid. 
Thus, the minimization functional for the $V$-curve is for Tikhonov regularization (using the identity $\rho' = -\alpha \eta'$; cf.~\cite{Ha01})
\begin{align*} 
\psi_{V}(\alpha)  &= \left\| \begin{bmatrix} \alpha \yl'(\alpha) \\ \alpha \xl'(\alpha) \end{bmatrix} \right\|  =
 \left\| \begin{bmatrix} \alpha  \frac{\rho'}{\rho} \\
\alpha  \frac{\eta'}{\eta}  \end{bmatrix} \right\|  \\
& = \alpha |\eta'| \sqrt{\frac{\alpha^2}{\rho^2}+\frac{1}{\eta^2}} = 
\pls(\alpha)^2 \sqrt{\frac{\alpha^2}{\rho^2}+\frac{1}{\eta^2} } \\
& =\plr(\alpha)^2  \sqrt{\frac{\alpha^2|\eta|^2}{\rho^2}+1 } = 
\plr(\alpha)^2  \sqrt{\frac{1}{\zeta^2}+1 }. 
\end{align*} 
Thus, the V-curve is essentially a weighted form (with weight 
$\sqrt{\frac{1}{\zeta^2}+1 }\geq 1$)
of our simple L-ratio functional $\plr$. 
It is obvious that the simple-L functional equals  the derivative of the parameterization of the 
y-axis of the non-logarithmic L-curve 
$(\rho(\alpha),\eta(\alpha))$ weighted with $\alpha$, which also equals
the derivative of the x-axis parameterization as $\rho'(\alpha) = -\alpha \eta'$.  


Another related method is the 
so-called composite residual and smoothing operator method (CRESO-method) \cite{creso85}.  It defines 
the regularization parameter by an 
argmax of the 
function 
\[ C(\alpha):= \|\xad\|^2 + 2 \alpha \frac{\partial}{\partial \alpha } \|\xad\|^2 = \eta + 2 \alpha \eta'.\]
Since maximizing $C(\alpha)$ is the same as 
minimizing $-C(\alpha)$,  we observe that the method minimizes the functional
\[ -C(\alpha) = \eta(\alpha) (2\plr(\alpha,\yd)^2 -1).\]
Since $\eta(\alpha)$ is bounded from below
(and approaches $\|\xd\|^2$ for the optimal 
choice of $\alpha$), we 
may regard the CRESO method essentially as 
a variant of the simple-L ratio method.

It is  worth mentioning that the expression denoted by $\zeta$ in the curvature in Lemma~\ref{firstlem} 
also has a relation to existing  parameter choice functionals.  In fact, in the simplest case, the 
Brezinski-Rodriguez-Seatzu rule \cite{BrRoSe08,BRS2} is defined as the minimizer of 
$ \frac{\|A \xad -\yd\|^2}{\alpha \|\xad\| },$ which in our notation equals $\|\xad\| \zeta$. 

\end{remark}

\section{Convergence theory for Tikhonov regularization} 
The convergence theory for error-estimating heuristic methods is 
based on the idea that such a functional $\psi(\alpha)$ behaves 
in a similar way to the total error $\|\xad-\xd\|$. Hence, minimizing 
$\psi(\alpha)$ should also give a small total error and thus a 
good parameter choice.  For verifying this,  we have 
to estimate the functionals against the approximation and stability errors, 
which can be expressed in terms of the SVD  as follows: 
\begin{align}  
\|\xad -\xa\|^2 &= \sum_{i} \frac{\lambda_i}{(\lambda_i +\alpha)^2} |\scp{\yd-y,v_i}|^2, \\
\|\xa -\xd\|^2 &= 
\sum_i  \frac{\alpha^2}{(\lambda_i +\alpha)^2}   |\scp{\xd,u_i}|^2. 
\end{align}
As usual, the total error $\|\xad -\xd\|$ can be bounded 
by the stability error and the approximation error as in \eqref{upper}. 

%
We may split  the functional $\pls$ in a similar way, into a  noise-dependent term
and an $\xd$-dependent one: 
\begin{align*}  \pls(\alpha,e)^2&:=  \|\rho_\alpha(AA^*)^\frac{1}{2}(\yd-y)\|^2 = 
 \sum_{i} \frac{\alpha \lambda_i}{(\lambda_i +\alpha)^3} |\scp{\yd-y,v_i}|^2, \\
 \pls(\alpha,\xd)^2& :=  \|\rho_\alpha(AA^*)^\frac{1}{2} y\|^2 = 
   \|\rho_\alpha(AA^*)^\frac{1}{2} A \xd \|^2 = 
  \sum_{i} \frac{\alpha \lambda_i^2 }{(\lambda_i +\alpha)^3} |\scp{\xd,u_i}|^2.  
 \end{align*} 
Obviously, we have the bound 
\begin{equation}\label{eq:psisplit}
    \pls(\alpha) \leq \pls(\alpha,e) +  \pls(\alpha,\xd).\end{equation}
Convergence is based on the following theorem which is proven in \cite{KiETNA}:
\begin{theorem}\label{conth}
Let a $\psi$-functional be given by some nonegative continuous function
$\rho_\alpha(\lambda)$ defined on the spectrum of $A^*A$.
Let $\as$ be selected as 
$$\as = \text{\rm argmin}_\alpha\, \|\rho_\alpha(A^*A)^\frac{1}{2} \yd\|.$$ 
Assume that 
\begin{align}\label{eq:upup}
\|\rho_\alpha(A^*A)^\frac{1}{2} A \xd \| &\leq  \uppera(\alpha), & 
\|\rho_\alpha(A^*A)^\frac{1}{2} (\yd-y)\|&\leq \upperd(\alpha), 
 \end{align} 
 where $ \uppera(\alpha)$ 
is monotonically increasing and 
$ \upperd(\alpha)$ is monotonically decreasing. 
Furthermore, assume the following lower bounds involving the 
stability and approximation errors: 
\begin{align} 
\|\xad -\xa\| & \leq  \Car \|\rho_\alpha(A^*A)^\frac{1}{2} (\yd-y)\|, \label{esta}\\  
\|\xa -\xd\| &\leq  \Phi\left(\|\rho_\alpha(A^*A)^\frac{1}{2} A \xd \|\right), \label{estd}
\end{align} 
with some increasing  function $\Phi$. 
Then the total error can be bounded by 
\[ \|\xads - \xd\| \leq \Phi\left( 2 \inf_\alpha \left\{  \uppera(\alpha)  +  \upperd(\alpha)\right\} \right) + 
2 \Car \inf_\alpha \left\{  \uppera(\alpha)  +  \upperd(\alpha) \right\}.\] 
\end{theorem}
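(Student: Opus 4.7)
The plan is to decompose the total error via \eqref{upper}, apply the estimates \eqref{esta}--\eqref{estd} at $\alpha=\as$, and then bound the noise-dependent quantity $\|\rho_{\as}(A^*A)^{1/2}(\yd-y)\|$ and the smoothness-dependent quantity $\|\rho_{\as}(A^*A)^{1/2}A\xd\|$ separately by $2(\uppera(\alpha)+\upperd(\alpha))$ for an arbitrary comparison parameter $\alpha$. Taking the infimum over $\alpha$ will then produce the stated bound.

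Concretely, the first step is to write
\[
\|\xads-\xd\| \leq \|\xads-\xas\|+\|\xas-\xd\| \leq \Car\,\|\rho_{\as}(A^*A)^{1/2}(\yd-y)\| + \Phi\!\left(\|\rho_{\as}(A^*A)^{1/2}A\xd\|\right),
\]
which reduces the claim to bounding these two $\rho_{\as}$-norms. Abbreviating $\psi(\alpha,e):=\|\rho_\alpha(A^*A)^{1/2}(\yd-y)\|$, $\psi(\alpha,\xd):=\|\rho_\alpha(A^*A)^{1/2}A\xd\|$, and $\psi(\alpha):=\|\rho_\alpha(A^*A)^{1/2}\yd\|$, the triangle inequality applied to $\yd=A\xd+e$ gives both $\psi(\alpha,e) \leq \psi(\alpha)+\psi(\alpha,\xd)$ and $\psi(\alpha,\xd) \leq \psi(\alpha)+\psi(\alpha,e)$. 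The minimality of $\as$ combined with \eqref{eq:upup} yields $\psi(\as) \leq \psi(\alpha) \leq \uppera(\alpha)+\upperd(\alpha)$ for every admissible $\alpha$.

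The central step is a case analysis comparing $\as$ with an arbitrary $\alpha$. If $\as \leq \alpha$, monotonicity of $\uppera$ gives $\psi(\as,\xd) \leq \uppera(\as) \leq \uppera(\alpha)$, and hence $\psi(\as,e) \leq \psi(\as)+\uppera(\alpha) \leq \upperd(\alpha)+2\uppera(\alpha)$. If $\as>\alpha$, monotonicity of $\upperd$ gives $\psi(\as,e) \leq \upperd(\as) \leq \upperd(\alpha)$, and symmetrically $\psi(\as,\xd) \leq 2\upperd(\alpha)+\uppera(\alpha)$. In either case both components are bounded by $2(\uppera(\alpha)+\upperd(\alpha))$. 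Substituting this into the error split, using the monotonicity of $\Phi$, and taking the infimum over $\alpha$ completes the proof.

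The step I anticipate as the main obstacle is precisely this decoupling, since the minimality of $\as$ only controls the mixed quantity $\psi(\as)$ rather than the individual pieces $\psi(\as,e)$ and $\psi(\as,\xd)$ that the error bounds require. The monotonicity hypotheses of \eqref{eq:upup} are exactly what makes the decoupling possible: one of the two components is directly controlled at $\as$ by whichever of $\uppera$, $\upperd$ is monotone in the right direction relative to the comparison parameter, and the remaining component is then recovered via the triangle inequality together with the minimality inequality.
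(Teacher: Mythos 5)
Your proof is correct, and it follows essentially the same route as the argument the paper defers to in \cite{KiETNA}: split the error at $\as$ via \eqref{esta}--\eqref{estd}, use the triangle inequality together with the minimality of $\as$, and decouple the two components by comparing $\as$ with an arbitrary $\alpha$ through the monotonicity of $\uppera$ and $\upperd$, which yields $2(\uppera(\alpha)+\upperd(\alpha))$ as a bound for both pieces in either case. Nothing further is needed beyond the (standard, harmless) continuity/approximation remark when passing to the infimum inside $\Phi$.
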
 

If a heuristic parameter choice functional 
$\psi$ is (under certain circumstances) a good estimator for both the approximation error and the stability error,
i.e., both the lower bounds hold and the upper bounds $\uppera,\upperd$ are close to the approximation and stability error, then  
the  corresponding parameter choice 
is usually a successful one in the sense that it 
yields the optimal order of convergence.

\subsection{\boldmath Upper bounds for $\pls$} 
At first we provide upper bound  for   $\pls(\alpha,\yd-y)$:
Since $\rho_\alpha(\lambda) \leq \frac{\lambda}{(\lambda +\alpha)^2}$, the next 
result follows 
immediately:
\begin{lemma}
We have that 
\begin{equation}\label{eq:slup}
\pls(\alpha,e) \leq \upperd(\alpha):= \|\xad -\xa\| \leq  \frac{\delta}{\sqrt{\alpha} }. \end{equation}
\end{lemma}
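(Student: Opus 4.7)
The statement is a two-step chain of inequalities, both of which reduce to elementary pointwise comparisons of Tikhonov filter factors. The plan is to expand the relevant quantities in the singular value decomposition and make term-by-term estimates; no more sophisticated machinery should be needed.

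For the first inequality $\pls(\alpha,e) \le \|\xad-\xa\|$, I would start from the two SVD representations already recorded in the excerpt,
\[ \pls(\alpha,e)^2 = \sum_i \frac{\alpha\lambda_i}{(\lambda_i+\alpha)^3}\, |\scp{\yd-y,v_i}|^2, \qquad \|\xad-\xa\|^2 = \sum_i \frac{\lambda_i}{(\lambda_i+\alpha)^2}\, |\scp{\yd-y,v_i}|^2. \]
The pointwise comparison $\rho_\alpha(\lambda) = \tfrac{\alpha\lambda}{(\lambda+\alpha)^3} \le \tfrac{\lambda}{(\lambda+\alpha)^2}$ that the text already highlights reduces to $\alpha \le \lambda+\alpha$, which is trivially valid for every $\lambda$ in the spectrum of $A^*A$ (including $\lambda=0$, where both sides vanish). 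Summing term-by-term and taking square roots yields the first inequality.

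For the second inequality, I would apply the standard AM-GM bound $(\lambda+\alpha)^2 \ge 4\alpha\lambda$, which gives $\tfrac{\lambda}{(\lambda+\alpha)^2} \le \tfrac{1}{4\alpha}$. Inserting this into the SVD expansion of $\|\xad-\xa\|^2$ and using Parseval together with $\|\yd-y\|\le\delta$ gives
\[ \|\xad-\xa\|^2 \le \frac{1}{4\alpha}\sum_i |\scp{\yd-y,v_i}|^2 = \frac{1}{4\alpha}\|\yd-y\|^2 \le \frac{\delta^2}{4\alpha}, \]
and taking square roots delivers the slightly loose but correct bound $\delta/\sqrt{\alpha}$. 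There is no genuine obstacle in this proof: both steps are one-line pointwise estimates on Tikhonov filter factors, and the excerpt explicitly provides the key comparison. The only care needed is to check the pointwise inequality uniformly over the spectrum of $A^*A$, which is immediate.
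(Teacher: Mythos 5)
Your proposal is correct and follows essentially the same route as the paper: the first inequality comes from the pointwise filter-factor comparison $\rho_\alpha(\lambda)=\frac{\alpha\lambda}{(\lambda+\alpha)^3}\leq\frac{\lambda}{(\lambda+\alpha)^2}$, and the second is the standard Tikhonov stability estimate, which the paper treats as immediate. Your AM--GM step even yields the sharper constant $\frac{\delta}{2\sqrt{\alpha}}$, of which the stated bound $\frac{\delta}{\sqrt{\alpha}}$ is a harmless relaxation.
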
 
The term $\pls(\alpha,y)$ can be bounded in the 
following way:
\begin{lemma} 
We have 
\begin{equation}\label{eq:slap}
 \pls(\alpha,\xd)   \leq \uppera(\alpha):= \left( \sum_i 
\frac{\alpha }{(\lambda_i +\alpha)} |\scp{u_i,\xd}|^2\right)^\frac{1}{2} = 
\scp{\xd-\xa,\xd}^\frac{1}{2},
\end{equation} 
and $\uppera(\alpha)$ is monotonically increasing in $\alpha$. 
Moreover, if a source condition \eqref{source} is satisfied, then 
\[ \uppera(\alpha) \leq C \alpha^\mu \quad \text{for } \mu \leq \frac{1}{2}. \] 
\end{lemma}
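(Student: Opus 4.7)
The plan is to prove the three assertions (upper bound by $\uppera$, the identity with $\scp{\xd-\xa,\xd}$, monotonicity, and the source-condition rate) separately, each one reducing to an elementary computation on the singular value side. Recall from the preamble that
\[ \pls(\alpha,\xd)^2 = \sum_i \frac{\alpha\lambda_i^2}{(\lambda_i+\alpha)^3} |\scp{u_i,\xd}|^2. \]

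First I would establish the inequality $\pls(\alpha,\xd)\leq \uppera(\alpha)$ by a termwise comparison: since $\frac{\lambda_i^2}{(\lambda_i+\alpha)^2}\leq 1$, we obtain $\frac{\alpha \lambda_i^2}{(\lambda_i+\alpha)^3}\leq \frac{\alpha}{\lambda_i+\alpha}$, and summing over $i$ gives exactly $\uppera(\alpha)^2$.

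Next, to identify $\uppera(\alpha)^2$ with $\scp{\xd-\xa,\xd}$, I would use that from \eqref{appx} and $y=A\xd$ one has $\xa = (A^*A+\alpha I)^{-1}A^*A\,\xd$, hence
\[ \xd - \xa = \alpha (A^*A+\alpha I)^{-1} \xd. \]
Taking the scalar product with $\xd$ and expanding in the orthonormal basis $\{u_i\}$ yields the sum $\sum_i \frac{\alpha}{\lambda_i+\alpha}|\scp{u_i,\xd}|^2$, which matches $\uppera(\alpha)^2$. Monotonicity is then immediate: each term $\frac{\alpha}{\lambda_i+\alpha} = 1 - \frac{\lambda_i}{\lambda_i+\alpha}$ is monotonically increasing in $\alpha$.

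Finally, under the source condition \eqref{source} we have $\scp{u_i,\xd} = \lambda_i^\mu \scp{u_i,\omega}$, so
\[ \uppera(\alpha)^2 = \sum_i \frac{\alpha \lambda_i^{2\mu}}{\lambda_i+\alpha}|\scp{u_i,\omega}|^2. \]
The key elementary estimate, valid for $0\leq \mu\leq \tfrac{1}{2}$, is
\[ \frac{\alpha \lambda^{2\mu}}{\lambda+\alpha} \leq \alpha^{2\mu}, \]
which I would prove by writing $\alpha = \alpha^{2\mu}\alpha^{1-2\mu}$ and bounding $\alpha^{1-2\mu}\lambda^{2\mu}\leq (\alpha+\lambda)^{1-2\mu}(\alpha+\lambda)^{2\mu} = \alpha+\lambda$ (using $1-2\mu\geq 0$ and $2\mu\geq 0$). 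Inserting this bound and using $\|\omega\|\leq C$ gives $\uppera(\alpha)\leq C\alpha^\mu$.

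No step poses a real obstacle; the only subtle point is the interpolation inequality at the end, where the restriction $\mu\leq 1/2$ is essential (both exponents $2\mu$ and $1-2\mu$ must be nonnegative for the splitting to work), reflecting the limited qualification of the $\pls$ functional as an approximation error estimator.
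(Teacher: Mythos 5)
Your proof is correct and follows essentially the same route as the paper: the termwise bound $\frac{\alpha\lambda^2}{(\lambda+\alpha)^3}\leq\frac{\alpha}{\lambda+\alpha}$ is exactly the paper's argument, while the identity with $\scp{\xd-\xa,\xd}$, the monotonicity, and the rate $\uppera(\alpha)\leq C\alpha^\mu$ for $\mu\leq\tfrac12$ are facts the paper dismisses as standard and you simply work out explicitly (correctly, including the interpolation step that forces $\mu\leq\tfrac12$).
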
 
\begin{proof} 
Noting the definition of $\rho_\alpha(\lambda)$ in \eqref{xxa} and that 
$\frac{\lambda}{(\lambda +\alpha)} \leq 1$, we have that 
$\rho_\alpha(\lambda)^2 \lambda \leq \frac{\alpha}{(\lambda +\alpha)}$, 
which verifies the result. 
The fact that the last expression is monotone and allows for convergence rates is standard. 
\end{proof} 

\begin{remark}\label{earlysat} \rm
From the previous lemmas we obtain that under a 
source condition and by~\eqref{eq:psisplit}
\[ \inf_\alpha \pls(\alpha) \leq 
\inf_\alpha \left(\uppera(\alpha) + \upperd(\alpha) \right)\leq 
\inf_\alpha \left(C \alpha^\mu  + \frac{\delta}{\sqrt{\alpha}}\right)
\sim \delta^\frac{2\mu}{2\mu+1}, \qquad 
\text{for } \mu \leq \frac{1}{2}. \]
This is the optimal-order rate of the error, but it is only 
achieved under the restriction that  $\mu \leq \frac{1}{2}$. 
Thus, $\pls$ shows {\em early saturation}, that is, 
it is only of the same order as the optimal rate 
for a lower smoothness index, 
but it shows suboptimal rates for $\mu \geq \frac{1}{2}$. This is akin to 
the early saturation of the discrepancy principle~\cite{EnHaNe96} 
and the HD-method \cite{KiETNA}.
\end{remark}

\subsection{\boldmath Lower bounds for $\pls$} 
The main 
issue in the convergence theory is to find conditions 
which  are sufficient to verify the lower bounds in Theorem~\ref{conth}. 
However, it is well-known that due to the so-called 
Bakushinskii veto \cite{Ba84,KiETNA}, a heuristic parameter choice functional {\em cannot}  be a valid estimator for the 
stability error in the sense that \eqref{esta} holds 
unless the permissible noise $\yd-y$ is  restricted  in some sense. 
Conditions imposing such 
noise restrictions are at the heart of the convergence theory.

We recall the following classical noise restrictions 
that were used in \cite{KiNe,KiETNA} denoted as  
Muckenhoupt-type conditions (MC): 

\begin{definition} 
The condition $\MC_1$ is satisfied if 
there exists a constant $\Ci$ such that for all 
appearing errors $e =\yd-y$ it holds that  for all $0 \leq \alpha \leq \alpha_{max}$, 
\begin{equation}\label{eq:MC1} 
\sum_{\lambda_i \geq \alpha}  \frac{\alpha}{\lambda_i} |\scp{e,v_i}|   ^2  
\leq \Ci 
\sum_{\lambda_i \leq \alpha}  |\scp{e,v_i}|   ^2.  
\end{equation} 
The condition $\MC_2$ is satisfied if 
there exists a constant $\Cii$ such that for all 
appearing errors $e =\yd-y$ it holds that   for all $0 \leq \alpha \leq \alpha_{max}$, 
\begin{equation} \label{eq:MC2} 
\sum_{\lambda_i \geq \alpha}
\frac{\alpha}{\lambda_i}  |\scp{e,v_i}|   ^2  
\leq \Cii \sum_{\lambda_i \leq \alpha} 
 \frac{\lambda_i}{\alpha}  |\scp{e,v_i}|   ^2. 
\end{equation} 
\end{definition} 

It is obvious that $\MC_2$ is slightly stronger than $\MC_1$: $\MC_2 \Longrightarrow \MC_1$.
Simplify put, these conditions are  {\em irregularity} conditions on the noise in the 
sense that $e$ should not be smooth (i.e., in the range of $A$). Meanwhile, they 
are quite well understood and are satisfied in many cases. Moreover, it has been shown 
that for mildly ill-posed problems they hold for white and colored noise with probability 
one \cite{KiPePi}.  Although $\MC_2$ is slightly stronger, they are often both satisfied.

Here we show that the error-dependent part of 
$\pls$ is an upper bound for the error propagation term. 
As mentioned before, for this we require a Muckenhoupt condition:

\begin{proposition}
Let $\yd-y$ satisfy a Muckenhoupt-type condition $\MC_2$ with constant $\Cii$. 
Then with $\rho_\alpha(\lambda)$ corresponding to the $\pls$-functional, we have 
\[ \|\xad-\xa\| \leq \sqrt{\Cii+1} \|\rho_\alpha(AA^*)^\frac{1}{2}(\yd-y)\|.  \]
\end{proposition}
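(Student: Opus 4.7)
The plan is to pass to the SVD representation, where the stability error reads
\[
\|\xad - \xa\|^2 = \sum_i \frac{\lambda_i}{(\lambda_i+\alpha)^2}|e_i|^2, \qquad e_i := \scp{\yd-y, v_i},
\]
while the squared right-hand side is $\pls(\alpha,e)^2 = \sum_i \frac{\alpha\lambda_i}{(\lambda_i+\alpha)^3}|e_i|^2$. The elementary identity
\[
\frac{\lambda_i}{(\lambda_i+\alpha)^2} = \frac{\alpha\lambda_i + \lambda_i^2}{(\lambda_i+\alpha)^3}
\]
splits the stability error cleanly into $\pls(\alpha,e)^2$ plus a remainder $R(\alpha) := \sum_i \frac{\lambda_i^2}{(\lambda_i+\alpha)^3}|e_i|^2$, so that the task reduces to bounding $R(\alpha)$ by a constant multiple of $\pls(\alpha,e)^2$.

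To estimate $R(\alpha)$, I will split the sum at the threshold $\lambda_i = \alpha$. For $\lambda_i \leq \alpha$ the factor $\lambda_i/\alpha \leq 1$ immediately yields $\frac{\lambda_i^2}{(\lambda_i+\alpha)^3} \leq \frac{\alpha\lambda_i}{(\lambda_i+\alpha)^3}$, so this portion is controlled by $\pls(\alpha,e)^2$ directly. The delicate piece is the high-frequency tail $\sum_{\lambda_i > \alpha}\frac{\lambda_i^2}{(\lambda_i+\alpha)^3}|e_i|^2$, which I dominate by $\sum_{\lambda_i > \alpha}\frac{1}{\lambda_i}|e_i|^2 = \frac{1}{\alpha}\sum_{\lambda_i > \alpha}\frac{\alpha}{\lambda_i}|e_i|^2$. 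Invoking $\MC_2$ turns this into a low-frequency sum $\frac{\Cii}{\alpha}\sum_{\lambda_i \leq \alpha}\frac{\lambda_i}{\alpha}|e_i|^2$, and the estimate $(\lambda_i+\alpha)^3 \leq 8\alpha^3$ on the set $\{\lambda_i \leq \alpha\}$ then recognises this sum, up to a harmless absolute factor, as a further multiple of $\pls(\alpha,e)^2$.

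Collecting the two contributions gives $\|\xad-\xa\|^2 \leq C(\Cii+1)\pls(\alpha,e)^2$, and careful bookkeeping should reproduce the stated constant $\sqrt{\Cii+1}$. The main obstacle, as is typical for arguments that hinge on Muckenhoupt-type conditions, is the careful tracking of prefactors across the low/high frequency split so that the final constant comes out in the clean form announced; the conceptual content is entirely captured by the algebraic identity that decomposes $\frac{\lambda_i}{(\lambda_i+\alpha)^2}$ into the weight $\rho_\alpha(\lambda_i)$ plus a term whose high-frequency portion is precisely what $\MC_2$ is designed to control.
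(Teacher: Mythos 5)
Your proposal is correct in substance and follows essentially the same route as the paper: the whole argument rests on the spectral split at $\lambda_i=\alpha$, the bound $\frac{\lambda_i^2}{(\lambda_i+\alpha)^3}\leq\frac{1}{\lambda_i}$ on the high-frequency tail so that $\MC_2$ converts it into the low-frequency sum $\sum_{\lambda_i\leq\alpha}\frac{\lambda_i}{\alpha^2}|\scp{e,v_i}|^2$, and the comparison $(\lambda_i+\alpha)\asymp\alpha$ on $\{\lambda_i\leq\alpha\}$ to recognise that sum as a multiple of $\pls(\alpha,e)^2$; your preliminary identity $\frac{\lambda_i}{(\lambda_i+\alpha)^2}=\frac{\alpha\lambda_i+\lambda_i^2}{(\lambda_i+\alpha)^3}$ merely repackages what the paper does by bounding $\frac{\lambda_i}{(\lambda_i+\alpha)^2}$ directly by $\frac{\lambda_i}{\alpha^2}$ resp.\ $\frac{1}{\lambda_i}$ on the two blocks. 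The only caveat is the constant: as you half-admit, your bookkeeping yields $\|\xad-\xa\|^2\leq(2+8\Cii)\pls(\alpha,e)^2$, and no amount of care in this scheme removes the absolute factor coming from $(\lambda_i+\alpha)^3\leq 8\alpha^3$, so the bare $\sqrt{\Cii+1}$ is not actually reproduced; but the paper's own proof is no sharper (its displayed lower bound for the $\psi$-term even has the spectral regions mislabeled and silently drops the same factor), so this is a cosmetic discrepancy in the stated constant rather than a gap in your argument.
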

\begin{proof} 
As usual, the idea of the proof is to split the spectral decomposition into 
terms involving $\lambda \leq \alpha$ and  $\lambda > \alpha$:  
This works  because of  the estimates 
\begin{equation}\label{ratioest} 
\frac{1}{2} \begin{cases} \frac{1}{\alpha} & \lambda \leq \alpha\\
 \frac{1}{\lambda} & \lambda \geq \alpha \end{cases}\leq \frac{1}{\alpha +\lambda} \leq \begin{cases} \frac{1}{\alpha} & \lambda \leq \alpha,\\
 \frac{1}{\lambda} & \lambda \geq \alpha. \end{cases} 
 \end{equation}
Thus, using \eqref{ratioest} and \eqref{eq:MC2}
 \begin{align*}
 \|\xad-\xa\|^2 &= 
 \sum_i  \frac{\lambda_i}{(\lambda_i +\alpha)^2}    |\scp{e,v_i}|^2   
 \\  &\leq 
  \sum_{\lambda_i \leq \alpha }  \frac{\lambda_i}{\alpha^2}    |\scp{e,v_i}|^2  + 
  \sum_{\lambda_i \geq \alpha }  \frac{1}{\lambda_i}    |\scp{e,v_i}|^2  
  \leq  (1+\Cii )    \sum_{\lambda_i \leq \alpha }  \frac{\lambda_i}{\alpha^2}    |\scp{e,v_i}|^2. 
%
\end{align*} 
Conversely the $\psi$-expression can be estimated  as
\begin{align*}  
&\|\rho_\alpha(AA^*)^\frac{1}{2}(\yd-y)\|^2 = 
  \sum_i \frac{\lambda_i \alpha}{(\lambda_i +\alpha)^3}  |(e,v_i)|^2    \\
 & \qquad \quad =   \sum_{\lambda_i \leq \alpha }   \frac{\lambda_i \alpha}{(\lambda_i +\alpha)^3}
 |\scp{e,v_i}|^2   + 
   \sum_{\lambda_i \geq \alpha }   \frac{\lambda_i \alpha}{\alpha^3}   |\scp{e,v_i}|^2    
\geq   \sum_{\lambda_i \geq \alpha }   \frac{\lambda_i }{\alpha^2}  |\scp{e,v_i}|^2, 
\end{align*}
which yields the statement. 
\end{proof} 
\begin{remark} \rm
Note that the stability part of the 
simple-L curve method behaves similar to the QO-method, for which  also the 
condition $\MC_2$ has been postulated to obtain the analogous estimate. This is different to 
the HD- and HR-methods, where the condition $\MC_1$ is sufficient~\cite{KiETNA}.
\end{remark}

The next step involves  the approximation error: 
\begin{proposition}
Suppose that $\xd \not = 0$ satisfies a source condition \eqref{source}  with 
$\mu \leq 1$. Then for $\alpha \in (0, \alpha_{max})$,
and $\rho_\alpha(\lambda)$ corresponding to the $\pls$-functional, 
there is a constant $C$ such that 
\[ \|\xa-\xd\| \leq \frac{C}{ \|A^* A \xd\|^{2 \mu}}   \|\rho_\alpha(AA^*)^\frac{1}{2}y\|^{2\mu}. \]
\end{proposition}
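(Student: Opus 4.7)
My plan is to reduce the proposition to the combination of two essentially elementary ingredients: (i) the classical source-condition bound on the approximation error, and (ii) a simple lower bound on $\|\rho_\alpha(AA^*)^{1/2} y\|$ in terms of $\|A^*A\xd\|$ and $\sqrt{\alpha}$. Throughout, write $a_i := |\scp{\xd,u_i}|^2$ and use that $\lambda_i + \alpha$ is uniformly bounded, since $\lambda_i \leq \|A\|^2$ and $\alpha \leq \alpha_{\max}$; set $M := \|A\|^2 + \alpha_{\max}$.

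The first step is to invoke \eqref{eq:conratest}: under \eqref{source} with $\mu \leq 1$ one has
\[ \|\xa - \xd\| \leq C \alpha^{\mu}. \]
The second step is to note that, since $\rho_\alpha(\lambda)\lambda = \frac{\alpha \lambda^2}{(\lambda+\alpha)^3}$, the uniform upper bound on $\lambda_i + \alpha$ yields
\[ \|\rho_\alpha(AA^*)^{1/2} y\|^2 \;=\; \sum_i \frac{\alpha \lambda_i^2}{(\lambda_i + \alpha)^3}\, a_i \;\geq\; \frac{\alpha}{M^3} \sum_i \lambda_i^2 a_i \;=\; \frac{\alpha}{M^3}\,\|A^*A \xd\|^2. \]
Since $\xd \neq 0$ and hence $\|A^*A\xd\| > 0$, this inequality can be solved for $\alpha$:
\[ \alpha \;\leq\; M^3\, \frac{\|\rho_\alpha(AA^*)^{1/2} y\|^2}{\|A^*A\xd\|^2}. \]

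Raising this to the $\mu$-th power and substituting into the source-condition bound gives
\[ \|\xa - \xd\| \;\leq\; C\alpha^{\mu} \;\leq\; C\, M^{3\mu}\, \frac{\|\rho_\alpha(AA^*)^{1/2} y\|^{2\mu}}{\|A^*A\xd\|^{2\mu}}, \]
which is the claimed estimate, with the constant depending only on $\|A\|$, $\alpha_{\max}$, $\mu$, and the source element norm $\|\omega\|$.

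There is no real obstacle in this argument: the restriction $\mu \leq 1$ enters only through the Tikhonov saturation appearing in \eqref{eq:conratest}, and the lower bound in the second step is just the observation that the factor $(\lambda_i+\alpha)^{-3}$ is bounded below on the relevant range. The only point worth emphasising is that the hypothesis $\xd \neq 0$ is exactly what makes the denominator $\|A^*A\xd\|^{2\mu}$ meaningful.
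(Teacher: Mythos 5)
Your proof is correct and follows essentially the same route as the paper: the lower bound $\|\rho_\alpha(AA^*)^{1/2}y\|^2 \geq \alpha\,\|A^*A\xd\|^2/C_3^3$ coming from the uniform bound on $\lambda_i+\alpha$, combined with the rate estimate \eqref{eq:conratest} and solving for $\alpha$. The only detail the paper adds is the justification that $\xd \neq 0$ implies $A^*A\xd \neq 0$ (because the minimal-norm solution lies in $N(A)^\bot$), which you assert without comment.
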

\begin{proof}
As $(\alpha +\lambda_i) \leq \alpha_{max} + \|A\|^2 =:C_3$, we have that 
\begin{align*}  \|\rho_\alpha(AA^*)^\frac{1}{2}y\|^2   
&= \sum_i 
\frac{\alpha \lambda_i}{(\lambda_i +\alpha)^3}\lambda_i 
|\scp{\xd,u_i}|^2 \\
&\geq  \frac{\alpha}{C_3^3} \sum_i  \lambda_i^2 |\scp{\xd,u_i}|^2 = 
\frac{\alpha \|A^* A \xd\|^2 }{C_3^3 }.
\end{align*}
Conversely, from the classical convergence rate estimate \eqref{eq:conratest} we  obtain 
with a generic constant $C$ that
\begin{align*} 
\|\xa-\xd\|& \leq C \alpha^\mu  \leq C\left( 
\frac{C_3^3}{ \|A^* A \xd\|^2} 
\|\rho_\alpha(AA^*)^\frac{1}{2}y\|^2 \right)^\mu 
\\  &\leq 
\frac{C } { \|A^* A \xd\|^{2 \mu}}  \|\rho_\alpha(AA^*)^\frac{1}{2}y\|^{2 \mu}. 
\end{align*} 
Moreover, we note that $\xd$ is a minimum-norm solution and 
thus in $N(A)^\bot$. Thus if $\xd \not = 0$, then 
$A^* A \xd \not = 0$.
\end{proof} 

If we impose a certain regularity assumption on $\xd$, then 
it can be shown that the approximation part of $\pls$, $\|\rho_\alpha(AA^*)^\frac{1}{2}y\|$, is an upper bound for 
the approximation error. 
The regularity assumption \cite{KiNe,KiETNA} is similar to the Muckenhoupt-type condition 
but with the spectral parts interchanged: 
\begin{equation}\label{regcond} 
\sum_{\lambda_i \leq \alpha }
 |\scp{\xd,u_i}|^2 \leq D \sum_{\lambda_i \geq \alpha }
\frac{\alpha}{\lambda_i}   |\scp{\xd,u_i}|^2.
\end{equation}
For a comparison with other situations, we also state a different regularity condition that is also used in  \cite{KiETNA}:
\begin{equation}\label{regcond2} 
\sum_{\lambda_i \leq \alpha }
 |\scp{\xd,u_i}|^2 \leq D \sum_{\lambda_i \geq \alpha }
\left(\frac{\alpha}{\lambda_i}\right)^2   |\scp{\xd,u_i}|^2.
\end{equation}
Obviously, the first of these conditions, \eqref{regcond}, is weaker and the second 
implies the first. For the simple L-curve method, the weaker one suffices: 


\begin{proposition}
Let $\xd$ satisfy the regularity condition in  \eqref{regcond}.
 Then for $\alpha \in (0, \alpha_{max})$,
and $\rho_\alpha(\lambda)$ corresponding to the $\pls$-functional, there is a constant $C$ such that 
\[ \|\xa-\xd\| \leq  {C} \|\rho_\alpha(AA^*)^\frac{1}{2} y\|. \]
\end{proposition}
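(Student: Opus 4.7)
The plan is to imitate the proof of the analogous proposition for the stability error, splitting the spectral expansion at $\lambda_i = \alpha$ and applying the two-sided estimates \eqref{ratioest}. Concretely, I would write
\[
\|\xa - \xd\|^2 = \sum_i \frac{\alpha^2}{(\lambda_i + \alpha)^2} |\scp{\xd, u_i}|^2
\leq \sum_{\lambda_i \leq \alpha} |\scp{\xd, u_i}|^2 + \sum_{\lambda_i \geq \alpha} \frac{\alpha^2}{\lambda_i^2} |\scp{\xd, u_i}|^2,
\]
where the first estimate uses $\frac{\alpha}{\alpha + \lambda_i} \leq 1$ and the second uses $\frac{\alpha}{\alpha + \lambda_i} \leq \frac{\alpha}{\lambda_i}$. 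On the high-frequency piece, since $\lambda_i \geq \alpha$, one has $\frac{\alpha^2}{\lambda_i^2} \leq \frac{\alpha}{\lambda_i}$, so both remaining terms will reduce to (constants times) the single quantity $\sum_{\lambda_i \geq \alpha} \frac{\alpha}{\lambda_i} |\scp{\xd,u_i}|^2$ once the regularity condition \eqref{regcond} is applied to the low-frequency piece.

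Next, I would bound $\|\rho_\alpha(AA^*)^{1/2} y\|$ from below by restricting the sum to $\lambda_i \geq \alpha$. There $(\lambda_i + \alpha)^3 \leq (2\lambda_i)^3$ gives
\[
\|\rho_\alpha(AA^*)^{1/2} y\|^2 = \sum_i \frac{\alpha \lambda_i^2}{(\lambda_i + \alpha)^3} |\scp{\xd, u_i}|^2
\geq \frac{1}{8} \sum_{\lambda_i \geq \alpha} \frac{\alpha}{\lambda_i} |\scp{\xd, u_i}|^2.
\]
Combining this lower bound with the upper bound above (and using \eqref{regcond} to swap the low-frequency part for a multiple of the same high-frequency sum) yields
\[
\|\xa - \xd\|^2 \leq (1 + D) \sum_{\lambda_i \geq \alpha} \frac{\alpha}{\lambda_i} |\scp{\xd, u_i}|^2
\leq 8(1+D) \|\rho_\alpha(AA^*)^{1/2} y\|^2,
\]
which is the desired estimate with $C = \sqrt{8(1+D)}$.

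There is no real obstacle here; the argument is essentially dual to the stability-error proposition just proved, with the roles of noise and solution coefficients interchanged. The only thing to be careful about is that the filter $\rho_\alpha(\lambda) = \frac{\alpha \lambda}{(\lambda+\alpha)^3}$ for $\pls$ behaves like $\frac{\alpha}{\lambda}$ on the high-frequency region $\lambda_i \geq \alpha$ (up to the factor $1/8$), which is exactly the weight appearing on the right-hand side of the weak regularity condition \eqref{regcond}. This matching is precisely why \eqref{regcond} suffices (and one does not need the stronger \eqref{regcond2}), and it is worth flagging in the narrative of the proof.
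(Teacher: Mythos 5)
Your proposal is correct and follows essentially the same route as the paper's proof: split the spectral sum for $\|\xa-\xd\|^2$ at $\lambda_i=\alpha$, bound the high-frequency part by $\sum_{\lambda_i\geq\alpha}\tfrac{\alpha}{\lambda_i}|\scp{\xd,u_i}|^2$, bound $\|\rho_\alpha(AA^*)^{1/2}y\|^2$ from below by the same quantity on $\lambda_i\geq\alpha$, and absorb the low-frequency part via \eqref{regcond}. Your explicit constant $C=\sqrt{8(1+D)}$ is a welcome (and correct) refinement of the paper's generic constant.
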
 
\begin{proof} 
Using the splitting of the sums and  \eqref{ratioest}, we have
\begin{align*} 
&\|\xa-\xd\|^2 = \sum_i \frac{\alpha^2}{(\lambda_i +\alpha)^2}   |\scp {\xd,u_i}|^2 \\
& = \sum_{\lambda_i \leq \alpha }
\frac{\alpha^2}{(\lambda_i +\alpha)^2}  |\scp {\xd,u_i}|^2 + 
\sum_{\lambda_i \geq \alpha }
\frac{\alpha^2}{(\lambda_i +\alpha)^2}   |\scp {\xd,u_i}|^2\\
& \leq \sum_{\lambda_i \leq \alpha }
   |\scp {\xd,u_i}|^2 + \sum_{\lambda_i \geq \alpha }
\frac{\alpha^2}{\lambda_i^2} |\scp {\xd,u_i}|^2 \\ 
& \leq \sum_{\lambda_i \leq \alpha }
   |\scp {\xd,u_i}|^2 + \sum_{\lambda_i \geq \alpha }
\frac{\alpha}{\lambda_i}   |\scp {\xd,u_i}|^2. 
\end{align*} 
While for the approximation part of  $\pls$, using \eqref{ratioest} again, we obtain
\begin{align*} 
&\|\rho_\alpha(AA^*)^\frac{1}{2} y\|^2 = 
\sum_i 
\frac{\alpha \lambda_i}{(\lambda_i +\alpha)^3}\lambda_i 
 |\scp {\xd,u_i}|^2 \\
&\qquad \geq 
\sum_{\lambda_i \leq \alpha }
\frac{\alpha \lambda_i^2}{(\lambda_i +\alpha)^3}   |\scp {\xd,u_i}|^2 + 
\sum_{\lambda_i \geq \alpha }
\frac{\alpha \lambda_i^2}{(\lambda_i +\alpha)^3}   |\scp {\xd,u_i}|^2\\
&\qquad \geq \frac{1}{2} \sum_{\lambda_i \leq \alpha }
\frac{\lambda_i^2}{\alpha^2}   |\scp {\xd,u_i}|^2 + 
\frac{1}{2} \sum_{\lambda_i \geq \alpha }
\frac{\alpha}{\lambda_i}   |\scp {\xd,u_i}|^2\\
&\qquad \geq \frac{1}{2} \sum_{\lambda_i \geq \alpha }
\frac{\alpha}{\lambda_i}   |\scp {\xd,u_i}|^2.
\end{align*} 
Thus, the regularity condition  \eqref{regcond} ensures the bound. 
\end{proof}

Together with Theorem~\ref{conth} and the previous estimates, 
we arrive at the main theorem:
\begin{theorem}\label{th:one}
Let the error satisfy a Muckenhoupt-type condition $\MC_2$,
let $\xd$ satisfy a source condition \eqref{source} with $\mu \leq 1$, 
and let $\|\xd\| \not = 0$. 

 Then choosing the regularization parameter $\as$ as the minimizer of $\pls$ 
  yields the following error bounds 
 \[ \|\xads -\xd\| \leq C  \delta^{\frac{2\tilde{\mu}}{2\tilde{\mu}+1}  2 \tilde{\mu}} , \qquad \tilde{\mu} = \min\{\mu,\frac{1}{2}\}.\] 
 If, moreover, $\xd$ satisfies a regularity condition \eqref{regcond}, then 
 the optimal-order (for $\mu \leq \frac{1}{2}$)  estimate 
  \[ \|\xads -\xd\| \leq C  \delta^{\frac{2\tilde{\mu}}{2\tilde{\mu}+1}}   , \qquad \tilde{\mu} = \min\{\mu,\frac{1}{2}\},\] 
  holds.
\end{theorem}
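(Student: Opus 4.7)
The plan is to invoke Theorem~\ref{conth} directly, with the identifications provided by the preceding lemmas and propositions, and then to evaluate the resulting infimum explicitly using the source condition.

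First, I would collect the four ingredients that Theorem~\ref{conth} requires for $\rho_\alpha$ corresponding to $\pls$. The upper bound $\upperd(\alpha) = \delta/\sqrt{\alpha}$ for the noise-dependent part, together with the monotonically increasing upper bound $\uppera(\alpha) = \scp{\xd-\xa,\xd}^{1/2}$ for the $\xd$-dependent part, are furnished by the two lemmas in Section 2.1. The lower bound \eqref{esta} on the stability error, with constant $\Car = \sqrt{\Cii+1}$, comes from the first proposition in Section 2.2, and this is the step where the hypothesis $\MC_2$ enters. For the lower bound \eqref{estd} on the approximation error, I would use two different versions of $\Phi$ depending on which assertion of the theorem is being proved: the source-condition-only proposition provides $\Phi(t) = C\|A^*A\xd\|^{-2\mu} t^{2\mu}$ (well-defined since $\xd \neq 0$ implies $A^*A\xd \neq 0$ by minimum-norm-ness), while under the additional regularity condition \eqref{regcond} we have the linear $\Phi(t) = Ct$.

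Next, I would evaluate the infimum that governs the rate. By the lemma bounding $\uppera$, the source condition with exponent $\mu$ yields $\uppera(\alpha) \leq C \alpha^{\tilde\mu}$ where $\tilde\mu = \min\{\mu, 1/2\}$ — this is the early-saturation phenomenon recorded in Remark~\ref{earlysat}. Combining with $\upperd(\alpha) = \delta/\sqrt{\alpha}$ and performing the standard one-dimensional optimization gives
\[
\inf_\alpha \bigl(\uppera(\alpha) + \upperd(\alpha)\bigr) \leq C\,\delta^{\frac{2\tilde\mu}{2\tilde\mu+1}}.
\]
Plugging this estimate into the bound
\[
\|\xads - \xd\| \leq \Phi\bigl(2\inf_\alpha(\uppera+\upperd)\bigr) + 2\Car\inf_\alpha(\uppera+\upperd)
\]
from Theorem~\ref{conth} then yields both assertions: with $\Phi(t)=Ct^{2\mu}$ the $\Phi$-term dominates and produces the suboptimal rate stated in the first assertion, while with the linear $\Phi$ under \eqref{regcond} both terms are of order $\delta^{2\tilde\mu/(2\tilde\mu+1)}$, giving the optimal-order rate.

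I do not expect any serious obstacle; all the nontrivial work has been done in the preceding propositions. The one point that requires slight care is that the bound on $\uppera$ in the source-condition lemma saturates at $\mu = 1/2$, so the effective smoothness index driving the infimum is $\tilde\mu$ rather than $\mu$, even though the approximation-error estimate \eqref{eq:conratest} feeding into $\Phi$ holds up to $\mu = 1$. Keeping these two exponents straight (the $\mu$ in the exponent of $\Phi$ versus the $\tilde\mu$ in the infimum) is the only bookkeeping issue and accounts for the compound exponent appearing in the first assertion.
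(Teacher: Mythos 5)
Your proposal is correct and follows essentially the same route as the paper, which indeed proves Theorem~\ref{th:one} by feeding the bounds $\upperd(\alpha)=\delta/\sqrt{\alpha}$, $\uppera(\alpha)\leq C\alpha^{\tilde\mu}$, the $\MC_2$-based stability estimate with $\Car=\sqrt{\Cii+1}$, and the two choices of $\Phi$ (namely $\Phi(t)=Ct^{2\mu}/\|A^*A\xd\|^{2\mu}$, respectively $\Phi(t)=Ct$ under \eqref{regcond}) into Theorem~\ref{conth} and then optimizing $C\alpha^{\tilde\mu}+\delta/\sqrt{\alpha}$. The only bookkeeping nuance is that for $\mu>\tfrac12$ the $\Phi$-term with exponent $2\mu$ need not dominate the term $2\Car\inf_\alpha(\uppera+\upperd)\sim\delta^{1/2}$, but since the stated compound exponent $\tfrac{2\tilde\mu}{2\tilde\mu+1}2\tilde\mu$ equals $\tfrac12$ in that regime, both terms are still bounded by the claimed rate, so your argument delivers the theorem as stated.
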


\begin{remark} \rm
The convergence theorem for the simple-L method should be compared to the corresponding 
results for the HD, HR, and QO-rules in \cite{KiETNA}: Essentially, the functional 
$\pls$ requires the same conditions as the QO-rule, but it only achieves the 
optimal order (in the best case when a regularity condition holds) 
up to $\mu \leq \frac{1}{2}$, while the QO-rule 
does this (under the same regularity condition) 
for all $\mu$ up to the saturation index $\mu = 1$. In this sense, 
the QO-rule is an improvement of the simple-L method.  This is similar to the 
relations between HD and HR: the heuristic discrepancy method, $\psi_{HD}$, 
can also be only optimal up to $\mu \leq \frac{1}{2}$, while the Hanke-Raus method 
improves this up to $\mu = 1$. Thus $\pls$ is related to $\psi_{QO}$ in a similar way 
to how  $\psi_{HD}$ is related to  $\psi_{HR}$. 
\end{remark}

\subsection{\boldmath Convergence for $\plr$}
The previous analysis can be extended to the simple-L ratio method. 
We now consider a functional of the form 
\begin{equation}\label{combfun} 
\psi(\alpha,\yd) = \rhi(\alpha)\pls(\alpha,\yd), 
\end{equation} 
where $\rhi$ is a nonnegative function. The simple-L ratio corresponds to $\rhi(\alpha) = \frac{1}{\|\xad\|}$: 
We have the following proposition. (Here, $\Id$ denotes the identity function $x \to x$).
\begin{proposition}\label{prop:prod}
Let the error satisfy a Muckenhoupt-type condition $\MC_2$,
and let $\eqref{estd}$ hold for $\rho_\alpha$ corresponding to $\pls$.
Suppose that $\as$ is selected by 
\eqref{combfun}.
Then the following error estimates hold: For $\bar{\alpha} \in (0,\alpha_{max})$ arbitrary 
\begin{equation}
\begin{split}
 \|\xads -\xd\|  &\leq \frac{\rhi(\ba)}{\rhi(\as)} \left(\uppera(\ba) + \upperd(\ba) \right) + 
2  \max\{\Phi,\Car \Id\}(\uppera(\ba))  \qquad \text{ if } \as \leq \ba, \\ 
 \|\xads -\xd\|   &\leq \Car \upperd(\ba) + \Phi\left[\upperd(\ba) + \frac{\rhi(\ba)}{\rhi(\as)} \left(\upperd(\ba) + \uppera(\ba) \right)
 \right]   \qquad \text{ if } \as \geq \ba. 
 \end{split} 
\end{equation}
 Here $\upperd$ and $\uppera$ are defined in \eqref{eq:slup} and \eqref{eq:slap}.
\end{proposition}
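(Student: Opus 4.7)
The plan is to combine the defining minimality of $\as$ with the upper bounds on $\pls$ at an arbitrary comparison point $\ba$ (from the two preceding lemmas) and the lower bounds on the two error components (from the $\MC_2$ proposition and the hypothesis \eqref{estd}), and then to split into the two cases $\as\le\ba$ and $\as\ge\ba$ so that the monotonicity of $\uppera$ (increasing in $\alpha$) or of $\upperd$ (decreasing in $\alpha$) can be used to majorize whichever of $\pls(\as,\xd)$, $\pls(\as,e)$ is \emph{not} directly controlled by the minimization. The weight $\rhi$ will enter only through the ratio $\rhi(\ba)/\rhi(\as)$, which is the natural price for comparing the weighted functional at $\as$ with its value at $\ba$.

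First I would extract from the minimization property the bound $\rhi(\as)\pls(\as,\yd)\le\rhi(\ba)\pls(\ba,\yd)$, and combine it with the splitting \eqref{eq:psisplit} and the upper bounds $\pls(\ba,e)\le\upperd(\ba)$, $\pls(\ba,\xd)\le\uppera(\ba)$ to obtain the master inequality $\pls(\as,\yd)\le\frac{\rhi(\ba)}{\rhi(\as)}(\uppera(\ba)+\upperd(\ba))$. Next I would decompose the total error by the triangle inequality $\|\xads-\xd\|\le\|\xads-\xas\|+\|\xas-\xd\|$ and invoke the stability estimate coming from $\MC_2$ together with \eqref{estd}, yielding $\|\xads-\xd\|\le\Car\,\pls(\as,e)+\Phi(\pls(\as,\xd))$. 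From here the task reduces to estimating $\pls(\as,e)$ and $\pls(\as,\xd)$ separately.

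In the case $\as\le\ba$, monotonicity of $\uppera$ controls the approximation input directly, $\pls(\as,\xd)\le\uppera(\as)\le\uppera(\ba)$, while the noise input has to be routed through the master inequality as $\pls(\as,e)\le\pls(\as,\yd)+\pls(\as,\xd)\le\frac{\rhi(\ba)}{\rhi(\as)}(\uppera(\ba)+\upperd(\ba))+\uppera(\ba)$. Substituting into $\Car\,\pls(\as,e)+\Phi(\pls(\as,\xd))$ and absorbing the two $\uppera(\ba)$ contributions $\Phi(\uppera(\ba))$ and $\Car\,\uppera(\ba)$ into the single term $2\max\{\Phi,\Car\,\Id\}(\uppera(\ba))$ gives the first stated inequality. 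The case $\as\ge\ba$ is symmetric: now $\upperd$ controls the noise input directly, $\pls(\as,e)\le\upperd(\as)\le\upperd(\ba)$, and the approximation input is routed through the master inequality as $\pls(\as,\xd)\le\pls(\as,\yd)+\pls(\as,e)\le\frac{\rhi(\ba)}{\rhi(\as)}(\uppera(\ba)+\upperd(\ba))+\upperd(\ba)$; applying $\Phi$ and collecting yields the second inequality. The only delicate point is the bookkeeping: in each case the ``unprotected'' input, namely the one whose natural monotone majorant points the wrong way at $\as$ relative to $\ba$, must be expressed as $\pls(\as,\yd)$ plus the other, already-controlled input; once this direction is fixed in each case, the rest is a direct substitution using only the previously established bounds and the monotonicity of $\uppera,\upperd$.
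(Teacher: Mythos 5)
Your proposal is correct and follows essentially the same route as the paper's own proof: minimality of the weighted functional at $\as$ versus $\ba$, the splitting $\pls(\as,e)\le\pls(\as,\yd)+\pls(\as,\xd)$ (resp.\ $\pls(\as,\xd)\le\pls(\as,\yd)+\pls(\as,e)$), and the monotonicity of $\uppera$ and $\upperd$ in the two cases, combined with \eqref{esta} from $\MC_2$ and \eqref{estd}. The only cosmetic difference is that you divide by $\rhi(\as)$ earlier than the paper does; note that both your derivation and the paper's actually produce a factor $\Car$ in front of the $\frac{\rhi(\ba)}{\rhi(\as)}(\uppera(\ba)+\upperd(\ba))$ term in the case $\as\le\ba$, which the displayed statement omits, so this is an imprecision of the proposition as stated rather than of your argument.
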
 
\begin{proof}
Let $\as \leq \ba$. Then  from the previous estimates for $\pls$, 
the minimization property of $\rhi(\alpha) \pls(\alpha,\yd)$, 
and by the monotonicity of $\uppera$, we have 
\begin{align*} 
\|\xas-\xd\| &\leq \Phi\left(\pls(\as,\xd)\right) \leq \Phi(\uppera(\as)) \leq  \Phi(\uppera(\ba)),  \\ 
\rhi(\as) \|\xads-\xas\| &\leq  \Car \rhi(\as) \pls(\as,e) \\
& \leq \Car \rhi(\as) \pls(\as,\yd) + \Car \rhi(\as) \pls(\as,\xd)\\
&\leq 
\Car \rhi(\ba) \psi(\ba,\yd) +  \Car \rhi(\as) \uppera(\ba)\\
&\leq \Car \rhi(\ba)  \uppera(\ba) + \Car \rhi(\ba) \upperd(\ba) + \Car \rhi(\as) \uppera(\ba).
\end{align*} 
For $\as \geq \ba$, with the same arguments and  from  the monotonicity of $\upperd$ that
\begin{align*} 
\|\xads-\xas\| &\leq  \Car \pls(\as,e) \leq \Car \upperd(\as) \leq C_1\upperd(\ba), \\ 
\Phi^{-1}(\|\xa-\xd\|) &\leq \rhi(\as)\frac{\pls(\as,\yd)}{\rhi(\as)} + \pls(\as,e) 
\leq 
\rhi(\ba)\frac{\pls(\ba,\yd)}{\rhi(\as)} + \upperd(\as)  \\ 
& \leq  \frac{\rhi(\ba)}{\rhi(\as)} \uppera(\ba) +   (\frac{\rhi(\ba)}{\rhi(\as)}+1) \upperd(\ba). 
  \end{align*} 
\end{proof}

\begin{theorem}
Under the same conditions as Theorem~\ref{th:one} and if $\as$ is chosen 
by the simple-L ratio-method,
then the same error bounds hold if  $\delta$ is sufficiently small. 
\end{theorem}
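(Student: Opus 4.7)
The plan is to invoke Proposition~\ref{prop:prod} with $\rhi(\alpha) = 1/\|\xad\|$ (which is the choice corresponding to the simple-L ratio method) and with $\ba$ chosen as an (approximate) minimizer of $\uppera(\alpha) + \upperd(\alpha)$. Under the hypotheses inherited from Theorem~\ref{th:one}, the lemmas and propositions established for $\pls$ yield $\uppera(\ba) + \upperd(\ba) = O(\delta^{\frac{2\tilde\mu}{2\tilde\mu+1}})$ with $\tilde\mu = \min\{\mu,\half\}$, and the term $\max\{\Phi,\Car\Id\}(\uppera(\ba))$ is of the same asymptotic order as the right-hand side of Theorem~\ref{th:one} (the fast rate under \eqref{regcond}, the slower one otherwise). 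So the whole question reduces to controlling the ratio $\rhi(\ba)/\rhi(\as) = \|\xads\|/\|\xaad{\ba}\|$ that appears in both cases of the proposition.

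The key observation is that, for Tikhonov regularization, the map $\alpha \mapsto \|\xad\|^2 = \sum_i \lambda_i(\lambda_i+\alpha)^{-2}|\scp{\yd,v_i}|^2$ is strictly monotonically decreasing in $\alpha$, because $\partial_\alpha\bigl(\lambda(\lambda+\alpha)^{-2}\bigr) = -2\lambda(\lambda+\alpha)^{-3} < 0$. Consequently, in the regime $\as \geq \ba$ one has $\|\xads\| \leq \|\xaad{\ba}\|$, hence $\rhi(\ba)/\rhi(\as) \leq 1$, and the second bound of Proposition~\ref{prop:prod} directly yields an estimate of the desired order.

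In the remaining regime $\as \leq \ba$ a bootstrapping argument is needed. For the denominator, the optimal choice of $\ba$ satisfies $\ba \to 0$ and $\delta/\sqrt{\ba} \to 0$ as $\delta \to 0$, whence $\xaad{\ba} \to \xd$ by classical regularization theory; since $\xd \neq 0$, this gives $\|\xaad{\ba}\| \geq \half\|\xd\|$ once $\delta$ is small enough. For the numerator, use the trivial bound $\|\xads\| \leq \|\xd\| + \|\xads - \xd\|$. Inserting both estimates into the first case of Proposition~\ref{prop:prod} produces an inequality of the shape
\[
\|\xads - \xd\| \leq \frac{2}{\|\xd\|}\bigl(\|\xd\| + \|\xads - \xd\|\bigr)\,\epsilon_0 + 2\max\{\Phi,\Car\Id\}(\uppera(\ba)),
\]
with $\epsilon_0 := \uppera(\ba) + \upperd(\ba)$. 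For $\delta$ sufficiently small, $2\epsilon_0/\|\xd\| \leq \half$, so the $\|\xads - \xd\|$ term on the right can be absorbed into the left, and one recovers the same asymptotic bound as in Theorem~\ref{th:one}.

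The main obstacle is precisely this bootstrap absorption in the $\as \leq \ba$ case: it hinges on both $\|\xd\| > 0$ (assumed) and the smallness of $\epsilon_0$, which is exactly the ``$\delta$ sufficiently small'' qualifier in the statement. A minor secondary point is to check that $\max\{\Phi,\Car\Id\}$ behaves well enough under the multiplicative constants introduced by the bootstrap so that the final exponent in $\delta$ is unchanged; this follows from the explicit power-law (respectively linear) form of $\Phi$ established in the two preceding propositions for $\pls$.
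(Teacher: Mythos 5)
Your proof is correct, and it reaches the theorem by a genuinely somewhat different route than the paper's. Both arguments start from Proposition~\ref{prop:prod} with $\rhi(\alpha)=1/\|\xad\|$, take $\ba$ of a priori optimal order (so that $\xaad{\ba}\to\xd\neq 0$ and $\uppera(\ba)+\upperd(\ba)=O(\delta^{2\tilde\mu/(2\tilde\mu+1)})$), and both hinge on an absorption step, which is exactly where ``$\delta$ sufficiently small'' enters. The paper, however, multiplies both case estimates by $\rhi(\as)$, uses the monotonicity of $\alpha\mapsto\|\xad\|$ only to obtain the uniform bound $\rhi(\as)\leq\rhi(\alpha_{max})\leq C$ (so that, with $\Phi(x)=x^\xi$, $\xi\leq 1$, the factors $\rhi(\as)^{1/\xi}$ and $\rhi(\as)^{1/\xi-1}$ stay bounded), and then converts the weighted bound back via $\rhi(\as)\geq 1/(\|\xads-\xd\|+\|\xd\|)$ together with the remark that $x/(x+\|\xd\|)\sim x$ for small $x$ --- an implicit form of your absorption. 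You instead keep the ratio $\rhi(\ba)/\rhi(\as)$ and split cases: for $\as\geq\ba$ the monotone decay of $\|\xad\|$ makes the ratio at most one, so the second estimate of the proposition is immediately of the right order with no boundedness of $\rhi(\as)$ and no exponent bookkeeping; for $\as\leq\ba$ you bound the ratio by $\tfrac{2}{\|\xd\|}\bigl(\|\xd\|+\|\xads-\xd\|\bigr)$ and absorb explicitly. What your version buys is that it is slightly more self-contained (it never needs $\|\xaad{\alpha_{max}}\|$ bounded away from zero, which the paper's bound $\rhi(\as)\leq\rhi(\alpha_{max})\leq C$ tacitly uses) and it makes the smallness requirement on $\delta$ concrete ($2(\uppera(\ba)+\upperd(\ba))/\|\xd\|\leq\tfrac12$); the paper's weighting treats both cases in a single stroke at the price of the extra $\rhi(\as)$ manipulations. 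The one point to state explicitly in a final write-up is the one you flag at the end: $\Phi$ is a power law (linear under \eqref{regcond}), so the multiplicative constants produced by the absorption leave the exponent in $\delta$ unchanged.
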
 
\begin{proof}
We have that $\Phi(x)  = x^\xi$, where $\xi \leq 1$. 
The error estimates can be rewritten as 
\begin{align*}
\rhi(\as) \|\xads -\xd\|  &\leq {\rhi(\ba)} \left(\uppera(\ba) + \upperd(\ba) \right) + 
\rhi(\as)  2  \max\{\Phi,C_1 \Id\}(\uppera(\ba))&  &\text{ if } \as \leq \ba, \\ 
\rhi(\as)  \|\xads -\xd\|   &\leq C_1 \rhi(\as) \upperd(\ba) \\
&+ 
\Phi\left[ \rhi(\as)^\frac{1}{\xi} \upperd(\ba) + \rhi(\as)^{\frac{1}{\xi}-1} {\rhi(\ba)} \left(\upperd(\ba) + \uppera(\ba) \right)
 \right]&    &\text{ if } \as \geq \ba.  
\end{align*}

For the simple-L ratio method, we have $\rhi(\alpha) = \frac{1}{\|\xad\|}$. We take $\ba$ as the 
optimal order choice  $\ba \sim \delta^\frac{2}{2\tilde{\mu}+1}$, which implies 
$\xad \to \xd$, and hence for $\delta$ sufficiently small, we have that 
$\rhi(\ba) \sim  \frac{1}{\|\xd \|}$.  From the standard theory it follows that 
$\|\xad\|$ is monotonically decreasing, hence $\rhi(\alpha)$ is monotonically increasing.  
Thus, with some constant $C$ 
\[ \rhi(\as) \leq \rhi(\alpha_{max}) \leq C. \] 
In any case, the expressions 
$\rhi(\as)^\frac{1}{\xi},$ ${\rhi(\ba)}$ and, as $\xi \leq 1$, also  
$\rhi(\as)^{\frac{1}{\xi}-1}$ stay bounded.  
Hence, we obtain  that 
\[ \rhi(\as) \|\xads-\xd\| \leq C'\max\{\Phi,\Id\}\left(C \delta^\frac{2\tilde{\mu}}{2\tilde{\mu}+1}\right), \]
with different constants $C,C'$. Moreover, since 
\[ \rhi(\as) = \frac{1}{\|\xads\|} \geq \frac{1}{\|\xads -\xd\| + \|\xd\|}, \]
we have that 
\[  \frac{ \|\xads-\xd\|}{\|\xads -\xd\| + \|\xd\|} \leq  C'\max\{\Phi,\Id\}\left(C \delta^\frac{2\tilde{\mu}}{2\tilde{\mu}+1}\right). \]
Since $\frac{x}{x +\|\xd\|} \sim x$ for $x$ small, this yields the same order of estimates as before. 
\end{proof} 

The reason for requiring that $\delta$ is small is because the expression 
$\frac{ \|\xads-\xd\|}{\|\xads -\xd\| + \|\xd\|}$ is bounded by 1. Hence if the right-hand side 
(which is of the order of the optimal convergence) is large, the estimate holds trivially true but is void of content.

\subsection{Extension to other regularization methods}
We note that the simplification of the curvature of the 
L-curve relies heavily on Tikhonov regularization, which is the only regularization method for which 
formula \eqref{form} holds true. For general 
regularization schemes, the expression for the curvature 
becomes rather complicated.

With the same definition of the L-curve, 
the curvature can be calculated to be 
\[ \cur = \frac{\rho \eta}{\left( {\rho'}^2 \eta^2 + \rho^2 {\eta'}^2 \right)^\frac{3}{2}} 
\left( \eta'' \eta \rho ' \rho - \rho '' \rho \eta' \eta - \eta'^2 \rho' \rho + \rho'^2 \eta' \eta\right). \] 
For Tikhonov regularization, this can be simplified by the formula $\rho' = -\alpha \eta'$, 
but for other regularization methods, this is no longer possible. 
Similar as above, however, we introduce the variable $\zeta = \frac{\rho \eta'}{\rho'\eta}$, which 
for Tikhonov regularization agrees with the definition given in Lemma~\ref{firstlem}. 
Then we obtain that 
\begin{align}  \cur &= \frac{1}{|{\rho'} \eta|^3 \left( 1 + \zeta^2 \right)^\frac{3}{2}} 
\left( \eta'' \eta^2 \rho ' \rho^2 - \rho '' \rho^2 \eta' \eta^2 -  \zeta^2 ({\rho'} \eta)^3 + \zeta  ({\rho'} \eta)^3\right) \nonumber \\ 
& = 
\left[\frac{\eta \eta''}{\eta'^2} -\frac{\rho'' \eta}{\eta' \rho'} \right]  \frac{\zeta^2}{ \left( 1 + \zeta^2 \right)^\frac{3}{2}} +
\frac{-\zeta^2 + \zeta}{ \left( 1 + \zeta^2 \right)^\frac{3}{2}} . \label{expr}
\end{align} 
For Tikhonov regularization, the identity $\rho' = -\alpha \eta'$ yields that 
\[ \rho'' = - \eta' - \alpha \eta'' =  - \eta'  + \frac{\rho'}{\eta'} \eta'', \]
and this yields the formula~\eqref{formsi}. Thus, a fully analogous functional 
corresponding to $\plr$ would be to minimize the reciprocal of the 
expression in brackets in \eqref{expr}. However, due to the subsequent existence of several 
second-derivative terms, such a method  would not be qualified then to be named  ``simple''. 

We try to simplify the expression for asymptotic regularization (cf.~\cite{EnHaNe96}), 
which is a continuous version of classical Landweber iteration. 
The method is defined via an initial value problem in Hilbert spaces, 
\[ x(t)' = A^* p(t), \qquad x(0) = 0, \quad t\geq 0,  \]
where $p(t) =\yd - A x(t)$. 
The regularized solution is given by 
\[ \xad = x(\tfrac{1}{\alpha}). \]
When the derivative $x'(t)$ is replaced by a forward difference, this 
yields exactly Landweber iteration. 

For this method, we have the identities 
\[ p' = - A x', \quad x' = A^* p \quad \Rightarrow \pad' = - A A^* p.   \]
Thus, 
\begin{align*}  \eta' &= 2\scp{x,x'} = 2 \scp{A x, p}, \\
 \rho' &= 2 \scp{p,p'} = -2 \scp{p,A x'} = -2 \scp{x',x'} = -2 \|A^*p\|^2, \\
 \eta'' &= 2 \scp{A x', p} + 2\scp{A x, p'} = - \rho' - 2 \scp{Ax, Ax'}, \\
\rho'' &= -4 \scp{A^*p,A^*p'} = -4 \scp{A^*p,A^*p'} = 4\scp{A^*p,A^*A A^*p}. 
\end{align*} 
As the curvature is independent of the parameterization, we may use 
the variable $t$ in place of $\alpha$ to calculate it. Hence, 
the expression in brackets in \eqref{expr} can then be written as 
\begin{align*}
& \frac{\eta}{\eta'}\left( \frac{\eta''}{\eta'} - \frac{\rho'' }{\rho'}\right) 
 = \frac{\|x\|^2}{2\scp{A x, p}}\left( \frac{ \|A^*p\|^2}{\scp{A x, p}} - \frac{\scp{A x, p'}}{\scp{A x, p}} - \frac{2\scp{A^*p,A^*A A^*p}}{\|A^*p\|^2} \right).
\end{align*}
The last expression  $\frac{2\scp{A^*p,A^*A A^*p}}{\|A^*p\|^2}$ is bounded by $\|A^*A\|$.  Thus, the only way  that the L-curve  can have 
a large curvature is when $\scp{A x, p}= \eta'$ is small. This  essentially leads again to the simple L-curve method with the minor difference that 
the derivative is taken with respect to the $t$-variable. 

By analogy, we may transfer these results to Landweber iteration, where derivatives are replaced by finite differences. 
The simple L-curve method would then be defined by minimizing 
\begin{equation}\label{eq:lwthis} \psi(k) = \scp{A x_k, \yd - A x_k} \sim  \scp{x_k, x_{k+1} - x_k}, \end{equation}
over the iteration indices $k$. Clearly, this can be considered a discrete variant of $\pls$, where 
the derivative $\alpha \frac{\partial}{\partial \alpha}$ is replaced by a finite difference. 
Another possibility for defining a simple L-curve method is to use 
\eqref{xy} for general regularization method via their filter functions. In case of Landweber 
iteration this leads to a similar functional as in \eqref{eq:lwthis}, namely
\[ \psi(k) =  \scp{x_k, x_{2k} - x_k}. \]

Of further special interest is to use these methods for nonlinear (e.g., convex) Tikhonov regularization, 
where $\xad$ is defined as minimizer of 
\begin{equation} 
\label{convexfunctional}
x \to \|A x -\yd\|^2 + \alpha R(x), \end{equation}
with a general convex regularization functional $R$. For an analysis of several heuristic rules in this 
context, see \cite{KR}. 
Note that the L-curve method 
is then defined by analogy as a plot of $(\log(R(\xad)), \log(\|A \xad -\yd\|)$. 
It  has been 
applied with success  in  such a context, e.g., in \cite{xuli}. 
One should be cautioned, however, that 
here 
it  is not necessarily true that $\xad$ is differentiable with respect to $\alpha$, and moreover, 
$R(\xad)$ can be $0$, hence the L-graph in its logarithmic form 
is not defined there. 
If $R$ is smooth, then the formula \eqref{form} still holds with 
$\eta(\alpha) = R(\alpha)$, and we may define  a simple-L method as minimization of 
\[ \pls(\alpha) = -\alpha \frac{\partial}{\partial \alpha} R(\xad ). \]
However, for convex Tikhonov regularization it is preferable---due to a possible lack of 
differentiabi\-li\-ty---to replace the derivative $\alpha \frac{\partial}{\partial \alpha}$ by alternative 
expressions. One way  is to use a finite difference approximation on a logarithmic grid yielding 
\begin{equation}\label{discretesimpleLfunctional} 
\pls(\alpha) = R(x_{\alpha_{n+1},\delta} )- R(x_{\alpha_{n},\delta} ), \quad \alpha = \alpha_0 q^n \quad  q < 1.
\end{equation}
Another way is to replace the derivative by expressions obtained by Bregman iteration. 
In this case, the functional would be 
\begin{equation} \pls(\alpha) = R(\xad^{II} )- R(\xad), \label{convexsimplelfunc} \end{equation}
where $\xad^{II}$ is the second Bregman iterate; cf.~\cite{KR}.  Both methods can also be understood as a kind of 
quasi-optimality method, where the ``strict metric'' $d(x,y) = |R(x)- R(y)|$ (cf.~\cite{FLR}) is used for measuring convergence.
(a similar method has been tested in \cite{KLR}). Note that we may similarly adapt the simple-L ratio functional as
\begin{equation}
    \plr = \frac{R({x^\delta_\alpha}^{II})-R(x^\delta_\alpha)}{R(x^\delta_\alpha)},
    \label{convexsimplelratiofunc}
\end{equation}
with the notation as before.

\section{Numerical Tests}
We perform some numerical tests of the proposed methods. 
The noise-level $\delta$ is chosen such that the relative error has 
the values $(0.01\%$, $0.1\%$, $1\%$, $5\%$, $10\%$, $20\%$, $50\%)$.
Here, the first two are classified as "small", the second pair as "medium" and the last triple is classified as "large".
For each noise-level, 
we performed 10 experiments.  We tested the method $\pls$ (simple-L) , $\plr$ (simple-L ratio), the QO-method, 
and the original L-curve method  defined by maximizing the curvature.

A general observation was that whenever the L-curve showed a clear corner, then 
the selected parameter by both $\pls$ and $\plr$  was very close to that corner, 
which confirms the idea of those methods being simplifications of the L-curve method. 
Note, however, that closeness on the L-curve does not necessarily mean that 
the selected parameter is close as well since the parameterization around 
the corner becomes ``slow''. 

We compare the four methods, namely, 
the two new simple-L rules, the QO-rule, and the original 
L-curve,  
according to their total error for  
the respective selected $\alpha$ and calculate the 
ratio of the obtain error to the best possible error: 
\begin{equation}\label{defrat} J(\alpha^\ast) := \frac{d(\xads,\xd)}{\inf_{\alpha} d(\xad,\xd)},\end{equation}
where one would typically compute $J$ with $d(x,y):=\|x-y\|$ for the case of linear regularization.

\subsection{Linear Tikhonov Regularization}
We begin with classical Tikhonov regularization, in which case we compute the regularized solution as \eqref{Tikreg}.
\subsubsection{Diagonal Operator}
At first we consider a diagonal operator $A$ with singular values having 
polynomial decay: $\sigma_i = i^{-s}$ for some value $s$ and 
consider an exact solution also with polynomial decay 
$(\xd,u_i) = (-1)^i i^{-p}$. Furthermore we added random noise 
$\scp{e_i,v_i}  = \delta i^{-0.6}{ \tilde{e}}_i$, where $\tilde{e}_i$ are standard normally distributed 
values.

Table~\ref{mytable} displays the median of the values of 
$J$ over 10 experiments with different random noise realizations and for varying  smoothness indices $\mu$.
\begin{table}
\caption{Tikhonov Regularization, Diagonal Operator: Median of Ratio \eqref{defrat} of errors  rules over 
10 runs.}\label{mytable}
\begin{center}
\begin{tabular}{l|c|c|c|c}
& \multicolumn{1}{c}{simple-L}  & \multicolumn{1}{c}{simple-L rat.} & \multicolumn{1}{c}{QO}& 
\multicolumn{1}{c}{L-curve}  \\ \hline
\multicolumn{1}{l|}{$s = 2$, $\mu = 0.25$} & \multicolumn{4}{c}{} \\ \hline
$\delta$ small & 1.02  & 1.02 & 1.03 &    9.49 \\
$\delta$ medium & 1.01  & 1.02 & 1.08 &    1.78 \\
$\delta$ large & 1.79  & 1.06 & 1.18 &    1.15 \\
$\delta=50$\% & 1.97  & 3.64 & 1.42 &    1.46 \\ \hline
%
\multicolumn{1}{l|}{$s = 2$, $\mu = 0.5$} & \multicolumn{4}{c}{} \\ \hline
$\delta$ small & 1.48  & 1.48 & 1.01 &    50.68 \\
$\delta$ medium & 1.66  & 1.72 & 1.07 &    3.78 \\
$\delta$ large & 1.78  & 1.59 & 1.01 &    2.52 \\
$\delta=50$\% & 3.09  & 5.07 & 1.48 &    1.90 \\ \hline
\multicolumn{1}{l|}{$s = 2$, $\mu =1$} & \multicolumn{4}{c}{} \\ \hline
$\delta$ small & 3.88  & 3.88 & 1.07 &    77.12 \\
$\delta$ medium & 2.01  & 2.01 & 1.07 &    7.98 \\
$\delta$ large & 1.57  & 1.66 & 1.08 &    2.33 \\
$\delta=50$\% & 2.97  & 4.07 & 1.27 &    1.32 \\
\end{tabular}
\end{center}
\end{table} 
The table provides some information about the performance
of the rules. Based on additional numbers not presented here, we can state some conclusions: 
\begin{itemize}
    \item The simple-L and simple-L ratio outperform 
    the other rules for small smoothness index $\mu = 0.25$ and small data noise. 
    Except for very large $\delta$,
    the simple-L ratio is slightly better than the simple-L curve. For very large $\delta$, the simple-L method works but is inferior to QO while the simple-L ratio method fails 
    then.
    \item For high smoothness index, the QO-rule outperforms the other rules and it is the method of choice then.
    \item The original L-curve method often fails for 
    small $\delta$. For larger $\delta$ it works often only 
    acceptably. 
    Only in situations when $\delta$
    is quite large ($>20\%$) did we find several 
    instances when it outperforms all other rules. 
\end{itemize}
A similar experiment was performed for a more smoothing 
operator by setting $s = 4$ with similar conclusions. 
We note that the theory has indicated that 
for $\mu=0.5$, the simple-L curve is order optimal 
without any additional condition on $\xd$ while for 
the QO-rule this happens at $\mu =1$. One would thus 
expect that the simple-L rule perform better for 
$\mu=0.5$. However, this was not the case (only for $\mu \leq 0.25)$ and the reason 
is unclear. (We did not do experiments with 
an $\xd$ that does not satisfy the regularity condition \eqref{regcond}, though). 
Still, the result that the simple-L methods perform 
better for small $\mu$ is backed by the numerical results.

\subsubsection{Examples from Regularization Tools}
For the next scenario, we consider the  tomography (i.e. \texttt{tomo}) operator from Hansen's Regularization Tools \cite{HaReg} and seek to reconstruct the solution provided in the package. The data is corrupted with normally distributed random noise as before, i.e., $e_i=\delta\tilde{e}_i$. Note that the operator and solution are normalised such that $\|A\|=\|x^\dagger\|=1$ and our parameter search is restricted to the interval $[\sigma_{\text{min}},\|A\|^2]$, where $\sigma_{\text{min}}$ is the smallest singular value of the operator $A^\ast A$.
Similarly as for the previous experiment, in Table~\ref{TikhTomoTable}, we record the median of the values of $J$ over 10 different experiments with varying random noise realizations. 

\begin{table}
\caption{Tikhonov Regularization, \texttt{tomo} Operator: Median of Ratio \eqref{defrat} of errors  rules over 
10 runs.}\label{TikhTomoTable}
\begin{center}
\begin{tabular}{l|c|c|c|c}
& \multicolumn{1}{c}{simple-L}  & \multicolumn{1}{c}{simple-L rat.} & \multicolumn{1}{c}{QO}& 
\multicolumn{1}{c}{L-curve}  \\ \hline
$\delta$ small & 1.24   & 1.24  & 1.40  & 15.02     \\
$\delta$ medium & 1.04  & 1.04 & 1.01 & 2.09    \\
$\delta$ large & 1.76  & 1.31  & 1.76 & 1.81    \\
$\delta=50$\% & 1.23  & 1.23  & 1.23  & 1.27    \\
\end{tabular}
\end{center}
\end{table}

Next we consider the \texttt{heat} operator from Hansen's Regularization Tools with identical setup as before, except we choose $\alpha_{\text{min}}=10^{-9}$ as a fixed lower bound, since the singular values for the heat operator decay much faster, thus selecting $\alpha_{\text{min}}$ as the smallest singular value would be far too unstable.
In Table~\ref{TikhHeatTable}, one can find a record of the median values of $J$ for 10 different realizations of each noise level:

\begin{table}
\caption{Tikhonov Regularization, \texttt{heat} Operator: Median of Ratio \eqref{defrat} of errors  rules over 
10 runs.}\label{TikhHeatTable}
\begin{center}
\begin{tabular}{l|c|c|c|c}
& \multicolumn{1}{c}{simple-L}  & \multicolumn{1}{c}{simple-L rat.} & \multicolumn{1}{c}{QO}& 
\multicolumn{1}{c}{L-curve}  \\ \hline
$\delta$ small & 1.33  & 1.33  & 1.02  & 2.20  \\
$\delta$ medium & 1.23  & 1.25 & 1.02 & 1.11    \\
$\delta$ large & 1.61  & 1.07  & 1.45 & 1.36    \\
$\delta=50$\% & 1.85  & 1.19  & 1.37  & 1.41    \\
\end{tabular}
\end{center}
\end{table}
Overall, the RegTool examples indicate a similar behaviour as before, 
with the simple-L rules being competitive for small noise and sometimes 
even outperforming the  QO-rule, which is in general hard to beat. The original L-curve 
is particularly successful for large noise but often seems to fail in the other cases. 

\subsection{Convex Tikhonov Regularization}
We now investigate the heuristic rules for convex Tikhonov regularization, i.e., we consider $x^\delta_\alpha$ as the minimizer of the functional \eqref{convexfunctional} with 
a nonquadratic penalty $R$. Note that the convergence theory of the present paper 
does not cover this case. For the HD, HR, and QO-rules, some convergence results of the 
theory in \cite{KiETNA} have been extended to the convex case in \cite{KR}. 

Henceforth, the simple-L methods will consist of minimizing the functionals \eqref{convexfunctional} and \eqref{convexsimplelratiofunc}. Note that we did consider \eqref{discretesimpleLfunctional} as an alternative "convexification" of the simple L-curve method, but the former method appeared to yield more fruitful results and we therefore opted to stick with that.

\subsubsection{\boldmath $\ell^1$ Regularization}
To begin with, we consider $R=\|\cdot\|_{1}$ and the tomography operator \texttt{tomo}  as before,  
 but this time we would like to reconstruct a sparse solution $x^\dagger$.
Note that we compute a minimizer via FISTA \cite{BeTe}. In this case,  we measure the error with the $\ell^1$ norm, i.e., we compute $J$ with $d(x,y):=\|x-y\|_1$. 

In our experiments, we observed that the values of the aforementioned simple-L functionals were particularly small, therefore on occasion yielding negative values due to numerical errors. This problem was easily rectified however by taking the absolute value of \eqref{convexfunctional} and \eqref{convexsimplelratiofunc}, respectively, which is theoretically equivalent to the original functionals in any case. For the quasi-optimality functional, one now has several possible options, but we opted to use 
\begin{equation}
    \psi_{QO}(\alpha)=D_{\xi^\delta_\alpha}({x^\delta_\alpha}^{II},x^\delta_\alpha),
    \label{convexqofunc}
\end{equation}
the Bregman distance of the second Bregman iterate and the Tikhonov solution in the direction $\xi^\delta_\alpha\in\partial R(x^\delta_\alpha)$, which is the so-called right quasi-optimality rule discussed in \cite{KR}. For selecting the parameter according to the L-curve method of Hansen, maximizing the curvature via \eqref{form} is no longer an implementable strategy as $R$ is now non-smooth. Therefore, we elected to choose the parameter by visually inspecting the graph ($\log\|Ax^\delta_\alpha-y_\delta\|,\log\|x^\delta_\alpha\|_1)$ and selecting the appropriate corner point manually.
In Table~\ref{ell1TomoTable}, one may find a recording of the results.
\begin{table}
\caption{$\ell^1$ Regularization, \texttt{tomo} Operator: Median of Ratio \eqref{defrat} of errors  rules over 
10 runs.}\label{ell1TomoTable}
\begin{center}
\begin{tabular}{l|c|c|c|c}
& \multicolumn{1}{c}{simple-L}  & \multicolumn{1}{c}{simple-L rat.} & \multicolumn{1}{c}{QO}& 
\multicolumn{1}{c}{L-curve}  \\ \hline
$\delta$ small & 2.97   & 2.97   &  9.51 & 88.11   \\
$\delta$ medium & 60.48  &  81.93 & 5.87  & 7.82    \\
$\delta$ large & 1.15  & 1.15  & 1.01  & 57.98   \\
$\delta=50$\% & 1.31  & 1.14  & 1.31  & 51.22    \\
\end{tabular}
\end{center}
\end{table}

We note the following observations:
\begin{itemize}
    \item As mentioned already, the simple-L functionals produced very small values and therefore were somewhat oscillatory, i.e., they were prone to exhibiting multiple local minima. Our algorithm selected the smallest interior minimum, but in some plots, we observed that there were larger local minima which would have corresponded to a more accurate estimation of the optimal parameter.
    \item In order to visually detect the corner of the L-curve, it should be noted that one had to magnify the graph. For large noise levels, there was no such corner point.
\end{itemize}

\subsection{$\ell^{\frac{3}{2}}$ Regularization}
Continuing with the theme of convex Tikhonov regularization and more specifically $\ell^p$ regularization, we now consider \eqref{convexfunctional} with $R=\|\cdot\|_p$ and $p=\frac{3}{2}$. The considered forward operator $A:\ell^p(\mathbb{N})\to\ell^2(\mathbb{N})$ is a diagonal operator with polynomially decaying singular values as considered previously i.e., $\sigma_i = i^{-s}$ and we also consider a solution with polynomial decay 
$\scp{\xd,u_i} = (-1)^i i^{-p}$ and add random noise
$\scp{e_i,v_i}  = \delta i^{-0.6}{ \tilde{e}}_i$. Note that in this scenario, we are easily able to compute the Tikhonov solution and second Bregman iterate as we have a closed form solution of the associated proximal mapping operator; see~\cite{KR}. 

\begin{table}
\caption{$\ell^p$ Regularization, Diagonal Operator: Median of Ratio \eqref{defrat} of errors  rules over 
10 runs.}\label{convexdiagtable}
\begin{center}
\begin{tabular}{l|c|c|c|c}
& \multicolumn{1}{c}{simple-L}  & \multicolumn{1}{c}{simple-L rat.} & \multicolumn{1}{c}{QO}& 
\multicolumn{1}{c}{L-curve}  \\ \hline
\multicolumn{1}{l|}{$s = 2$, $\mu = 0.25$} & \multicolumn{4}{c}{} \\ \hline
$\delta$ small & 6.59  & 6.59 & 1.02 &  471.03   \\
$\delta$ medium & 1.95  & 1.95 & 1.18 & 31.22    \\
$\delta$ large & 1.10  & 1.10 & 1.07 &  1.11   \\
$\delta=50$\% & 1.13  & 1.21 & 1.11 &  1.22  \\ \hline
\multicolumn{1}{l|}{$s = 2$, $\mu = 0.5$} & \multicolumn{4}{c}{} \\ \hline
$\delta$ small & 14.41  & 14.41 & 1.00 &  8.91   \\
$\delta$ medium & 2.05  & 2.05 & 1.01 & 115.00    \\
$\delta$ large & 1.09  & 1.09 & 1.03 &  1.15   \\
$\delta=50$\% & 4.72  & 5.61 & 1.01 &  1.80   \\ \hline
\multicolumn{1}{l|}{$s = 2$, $\mu =1$} & \multicolumn{4}{c}{} \\ \hline
$\delta$ small & 20.51  & 20.51 & 1.46 &  4.29   \\
$\delta$ medium & 1.36  & 1.36 & 1.33 & 107.77    \\
$\delta$ large & 1.14  & 1.14 & 1.40 &   1.34  \\
$\delta=50$\% & 7.06  & 9.28 & 1.08 &  1.53  \\
\end{tabular}
\end{center}
\end{table} 

A table of results is compiled in Table~\ref{convexdiagtable} and the following observations are noted:
\begin{itemize}
    \item Barring the quasi-optimality rule, all methods were generally subpar in case of small noise for all tested smoothness indices. In general, the quasi-optimality rule would appear to be the best performing overall at least, although trumped on a few occasions.
    \item The "sweet spot" for both simple-L methods appears to be medium to large noise. Overall, at least, they appear to perform marginally better for smaller smoothness indices. The original L-curve method performs quite well for larger noise, as has been observed in other experiments, but the margin for error is quite large for smaller noise levels.
\end{itemize}

\subsection{TV Regularization}
We now suppose that $x^\delta_\alpha$ is the minimizer of \eqref{convexfunctional} with $R=|.|_{TV}$ 
the total variation seminorm. Note that for numerical implementation, the above functional is often discretized as $R(x)=\sum\|\nabla x\|_1$, with $\nabla$ denoting a (e.g., forward) difference operator. The functional is minimized using FISTA with the proximal mapping operator for the total variation seminorm being computed by a fast Newton-type method as in \cite{KR}. In this case, we compute the error with respect to $\alpha$ via the so-called strict metric
\[
d_{\text{strict}}(x^\delta_\alpha,x^\dagger):=|R(x^\delta_\alpha)-R(x^\dagger)|+\|x^\delta_\alpha-x^\dagger\|_1,
\]
which was suggested in, e.g., \cite{KLR}, and we subsequently record the values of $J$ with $d=d_{\text{strict}}$, the results of which are provided in Table~\ref{TVTomoTable}.
\begin{table}
\caption{TV Regularization, \texttt{tomo} Operator: Median of Ratio \eqref{defrat} of errors  rules over 
10 runs.}\label{TVTomoTable}
\begin{center}
\begin{tabular}{l|c|c|c|c}
& \multicolumn{1}{c}{simple-L}  & \multicolumn{1}{c}{simple-L rat.} & \multicolumn{1}{c}{QO}& 
\multicolumn{1}{c}{L-curve}  \\ \hline
$\delta$ small & 8.67    & 8.67    &  2.21  & 22.88    \\
$\delta$ medium &  9.46 & 5.79  & 4.73  & 8.67    \\
$\delta$ large & 1.44  & 1.07  & 1.41  & 8.15    \\
$\delta=50$\% & 1.50  & 1.06  & 1.50  & 10.90    \\
\end{tabular}
\end{center}
\end{table}
We note the following observations:
\begin{itemize}
    \item The graph for the L-curve appeared only to produce an "L" shape for smaller noise.
    \item All rules appear to be suboptimal for small and medium noise, with the quasi-optimality rule faring slightly better than the other rules in that case.
    \item For larger noise levels, the simple-L ratio method is clearly the best performing.
\end{itemize}

\subsection{Summary}
To summarize the numerical results presented above, the simple-L methods are near optimal for linear Tikhonov regularization in case of low smoothness of the exact 
solution. 
Moreover, the simple-L rule in particular edges the simple-L ratio rule, but the margin of difference is small and only apparent for larger noise levels. 

We also considered convex Tikhonov regularization for which the simple-L functionals had to be adapted from their original forms. In any case, they were successfully implemented and demonstrated above satisfactory results. Interesting to note however, was that in this setting, the simple-L ratio method appeared to present itself as the slightly superior of the two variants.  

The original L-curve method of Hansen appears to have problems in case of 
small noise levels but is a reasonable choice for linear Tikhonov regularization 
and large noise. 

\section{Conclusion}
In conclusion, we reduced the standard L-curve method for parameter selection to a minimization problem of an error estimating surrogate functional from which two new parameter choice rules were born: the simple-L and simple-L ratio methods. The rules yielded convergence rates for Tikhonov regularization under a Muckenhout-type condition $\MC_2$, akin to that required for the quasi-optimality rule, but saturate early like the heuristic discrepancy rule.

The subsequent numerical experiments furthermore verified that the simple-L methods are not only capable of substituting as parameter choice rules for the L-curve method, but also outperform it the majority of the time, performing similarly even to the quasi-optimality rule, whilst being much easier to implement than the original L-curve method.
 

\bibliographystyle{siam}
\bibliography{lbib}
\end{document}